\title{Periodic points and tail lengths of split polynomial maps modulo primes}
\author{Benjamin Hutz}
\address{
Department of Mathematics and Statistics\\
Saint Louis University\\
St. Louis, MO}
\email{benjamin.hutz@slu.edu}
\author{Teerth Patel}
\address{
Saint Louis University\\
St. Louis, MO}
\email{pateltj@slu.edu}
\algrenewcommand\algorithmicrequire{\textbf{Input:}}
\definecolor{green}{rgb}{0,0.5,0}
\definecolor{dkgreen}{rgb}{0,0.6,0}
\definecolor{gray}{rgb}{0.5,0.5,0.5}
\definecolor{mauve}{rgb}{0.58,0,0.82}
\scriptsize\color{black},  
\definecolor{orange}{rgb}{1,0.65,0.17}
\def\Z{\mathbb{Z}}
\def\P{\mathbb{P}}
\def\A{\mathbb{A}}
\def\F{\mathbb{F}}
\DeclareMathOperator{\Per}{Per}
\DeclareMathOperator{\Pre}{Pre}
 \DeclareMathOperator{\lcm}{lcm}
\DeclareMathOperator{\PGL}{PGL} 
 \DeclareMathOperator{\End}{End}
\DeclareMathOperator{\ord}{ord}
\theoremstyle{plain}
\newtheorem{thm}{Theorem}[section]
\newtheorem*{thm*}{Theorem}
\newtheorem{lem}[thm]{Lemma}
\newtheorem{prop}[thm]{Proposition}
\newtheorem{cor}[thm]{Corollary}
\theoremstyle{definition}
\newtheorem{defn}[thm]{Definition}
\newtheorem{exmp}[thm]{Example}
\newtheorem*{exmp*}{Example}
\theoremstyle{remark}
\newtheorem*{rem}{Remark}
\subjclass[2010]{
37P25,  
37P35  
(primary);
11B50   	
(secondary)}
\begin{document}
\maketitle

Keywords: Functional graph, periodic points, chebyshev polynomial, power map

\begin{abstract}
    Explicit formulas are obtained for the number of periodic points and maximum tail length of split polynomial maps over finite fields for affine and projective space. This work includes a detailed analysis of the structure of the directed graph for Chebyshev polynomials of non-prime degree in dimension 1 and the powering map in any dimension. The results are applied to an algorithm for determining the type of a given map through analysis of its cycle statistics modulo primes.
\end{abstract}

\section{Introduction}

Given a finite set $S$ and a self-map $f:S \to S$, we can iterate $f$ as $f^n = f\circ f^{n-1}$ for $n \geq 1$ with $f^0$ defined as the identity map. We can define a directed graph $G_f$ whose vertices are given by the elements of $S$ and whose edges are $(x,f(x))$ for each $x \in S$. There are numerous questions one may ask about these graphs, such as what is the overall structure, the number of periodic versus non-periodic points, or the number of connected components. If one assumes that $f$ is a random mapping, where we define random mapping as the image of any given $x \in S$ is equally likely to be any element of $S$, then the statistics have been well studied because of their connections with computational number theory and cryptography; see, for example, the survey \cite{Mutafchiev2}. However, in practice, one typically uses explicit functions. Hence, the question of which functions behave as random maps is also well studied in certain instances. For quadratic polynomials $f(x) = x^2+c$, Pollard, in his $\rho$-factoring algorithm \cite{Pollard}, advised not to use $x^2$ or $x^2-2$ due to their non-random behavior. The statistics for these two maps, and some of their generalizations, have been extensively studied in dimension 1 \cite{Chou, Hernandez, Gilbert, Manes4, Shallit2}. It is well known that the only ``non-random'' polynomial maps in dimension 1 are those that come from an underlying group action; see Bridy for an explicit classification \cite{Bridy}.

In higher dimensions, the problem is much more complicated due to the additional freedom of multiple coordinates and interactions between the coordinate maps. This article resolves the statistics and structure of $G_f$ for higher dimensional maps constructed as each coordinate acted on independently by a polynomial map: a \emph{split-polynomial map}. Note that this includes the powering maps in all dimensions, but excludes maps such as the multivariate Chebyshev polynomials. This structure is easily computed in computer algebra systems for specific maps and for small primes. These easily computable quantities can then be compared to the structure theorems presented here to identify the map. Note that if the map is given in split form, this identification is clear from the defining polynomials of the map. However, the dynamical structure of a map is preserved by a change of variables through conjugation. After such a conjugation, the map can no longer be easily identified by examining its defining polynomials. Put more abstractly, this structure can be used to identify conjugacy classes in the moduli space of dynamical systems corresponding to split-polynomial maps.

These types of statistics have been studied by a number of authors in dimension 1 \cite{Chou, Hernandez, Gassert, Gilbert, Manes4, Shallit2}, and Striepel \cite{Streipel} does an empirical investigation of quadratic dynamical systems.
However, there seems to be few results in higher dimensions. Roberts and Vivaldi \cite{Vivaldi4} use cycle statistics modulo primes to classify maps of the affine plane as integrable, reversible, or neither. The first author applied their results to dynamical systems on a class of K3 surfaces in $\P^2 \times \P^2$ \cite{Hutz15}.

The main results and organization of this article is as follows.
Section \ref{sect_background} sets the basic definitions and notation. Section \ref{sect_dim1} summarizes the statistical results for random maps, gives the simple extension of the structure results for powering maps in dimension 1 to affine and projective spaces, and establishes the structure results for Cheybshev polynomials of non-prime degree. Section \ref{sect_dim1_alg} gives a simple algorithm for differentiating polynomial maps in dimension 1 using these structure theorems. Section \ref{sect_dimN} contains a count of the number of periodic points and a calculation of the max tail length for split polynomial maps in any dimension. Proposition \ref{prop_prod_count} provides the main tool for the main counting result in Theorem \ref{thm_dimN_periodic}. Section \ref{sect_dimN_alg} provides an algorithm for differentiating maps in dimension greater than one. Section \ref{sect_powering} provides a more detailed analysis of the powering map in higher dimensions. Finally, in Section \ref{sect_further} a few further directions of study are suggested.

\section{Background} \label{sect_background}
\subsection{Definitions and Notation}
Let $f:S \to S$ be a self-map of a set $S$. The \emph{minimal period} of a periodic point $x \in S$ is the smallest positive integer $n$ such that $f^n(x) = x$. A point $x$ is \emph{preperiodic} if there is a positive integer $m$ such that $f^m(x)$ is periodic. This $m$ is called the \emph{tail} of the preperiodic point. We associate to every preperiodic point a pair $(m,n) = (\text{tail}, \text{minimal period})$. We adopt the following notation:

\begin{align*}
    c(f,x) &= \text{ the minimal period of $x$ by $f$}\\
    t(f,x) &= \text{ the tail of $x$ by $f$}\\
    \Per_n(f,K) &= \{x \in K : c(f,x)= n\}\\
    \Per(f,K) &= \cup_{n\geq 1} \Per_n(f,K)\\
    \Pre_m(f,K) &= \{x \in K : t(f,x) = m\}\\
\end{align*}
    In this article, the set $S$ will be either the points of affine space over a finite field
    \begin{equation*}
        \A^N(\F_p) = \{(x_1,\ldots,x_N) : x_i \in \F_p\}
    \end{equation*}
    or the points of projective space over a finite field
    \begin{equation*}
        \P^N(\F_p) = \{(x_0,x_1,\ldots,x_N) : x_i \in \F_p\}/\sim
    \end{equation*}
    where $(x_0,\ldots,x_N) \sim (y_0,\ldots,y_N)$ if and only if
    \begin{equation*}
        x_iy_j = x_jy_i \quad \text{for all } 0 \leq i < j \leq N.
    \end{equation*}
    We use the notation $\#S$ to denote the number of elements in $S$.
    Note that many of the results in the literature use the set of points $\F_p^{\ast} = \A^1(\F_p) - \{0\}$.


    The self-mappings we consider are polynomial mappings.
    \begin{defn}
        A mapping $F: \P^1 \to \P^1$ is a \emph{polynomial mapping} if it has a totally ramified fixed point. Recall that a fixed point $z$ for $F$ is \emph{totally ramified} if $F^{-1}(z) = \{z\}$.
    \end{defn}
    If the totally ramified fixed point is the point at infinity, then the dehomogenization of $F$ ``looks'' like a polynomial in one variable. We make the equivalent definition in higher dimensions.
    \begin{defn}
        A mapping $F:\P^N \to \P^N$ is a \emph{polynomial mapping} if there is a totally invariant hyperplane.
    \end{defn}
    We are particularly interested in split polynomial mappings.
    \begin{defn}
        A \emph{split polynomial map} is a map of the form
        \begin{align*}
            f:\A^N &\to \A^N\\
            f(x_1,\ldots,x_{N}) &= (f_1(x_1),\ldots,f_{N}(x_{N}))
        \end{align*}
        for polynomials $f_i$. We will denote $f = f_1\times \cdots \times f_{N}$.

        Note, that a split polynomial map in dimension 1 is simply a polynomial map.
    \end{defn}
    Let $F:\P^N \to \P^N$ be the homogenization of a split polynomial map $f$. Notice that if the degrees of the $f_i$ are not all the same, then the resulting map is not a morphism in the dynamical system sense. In this case, reduction modulo primes does not commute with iteration, so we cannot identify $F$ by working modulo primes (see \emph{good reduction} in \cite[Chapter 2]{Silverman10}). Consequently, we will work only with split polynomial maps where the components are the same degree.

\section{Cycle Statistics in Dimension 1} \label{sect_dim1}
We give precise definitions for each type of map studied in dimension 1 and its cycle statistics. Polynomials maps in dimension 1 are thought to be either random or maps coming from an underlying group action: power maps or Chebyshev polynomials. Note that there are also automorphisms of the additive group $\mathbb{G}_a$, but they are linear maps $z \mapsto dz$, so have rather uninteresting dynamical properties.

For a more detailed background on maps associated to algebraic groups, see \cite[\S1.6]{Silverman10}.

\subsection{Random Maps}

    There are numerous results concerning random mapping statistics. The paper by Harris is one of the earliest appearing results \cite{Harris}. The survey from Mutafchiev \cite{Mutafchiev2}, while not recent, is also fairly comprehensive. The results on random maps most closely related to the current work is from Flajolet and Odlyzko \cite{Flajolet}, who prove the following theorem stated using our notation.
    \begin{thm}[\cite{Flajolet}] \label{thm_rnd_dim1}
        For a random mapping $f$ on a set $S$, we have the following asymptotic forms as $\#S \to \infty$.
        \begin{align*}
            \#\Per(f,S) &\sim \sqrt{\frac{\pi \cdot \#S}{8}} \quad (\text{\# of periodic pts})\\
            \bigcup_{0 \leq m < k} \Pre_m(f,S) &\sim (1-\tau_k)(\#S)\quad (\text{\# of $k$-th iterate image points})\\
            \max_x(t(f,x)) &= \log(2)\sqrt{2\pi\#S,} 
        \end{align*}
        where $\tau_k$ satisfies $\tau_0 = 0$ and $\tau_{k+1} = e^{-1+\tau_k}$.

    \end{thm}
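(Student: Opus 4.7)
The plan is to follow the classical approach of Flajolet and Odlyzko using the symbolic method and singularity analysis. First I would set up the exponential generating function framework: the functional graph of a random mapping decomposes uniquely into a set of connected components, each consisting of a cycle with a rooted tree hanging off every cycle vertex. If $T(z) = \sum_{n\geq 1} n^{n-1} z^n/n!$ is the EGF for rooted labeled trees (satisfying $T(z) = ze^{T(z)}$), then the EGF for cycles of such trees is $-\log(1-T(z))$, and hence the EGF for mappings on $n$ nodes is $M(z) = 1/(1-T(z)) = \sum_{n\geq 0} n^n z^n/n!$. The key analytic input is that $T$ has a unique dominant square-root singularity at $\rho = 1/e$ with local expansion $T(z) = 1 - \sqrt{2}\sqrt{1-ez} + O(1-ez)$; one then applies standard transfer theorems to extract coefficient asymptotics.

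For the count of periodic points I would introduce a bivariate generating function that marks every vertex lying on a cycle. Since a cycle of trees with marked cycle nodes has EGF $-\log(1 - uT(z))$, differentiating the mapping EGF at $u = 1$ yields $T(z)/(1-T(z))^2$ as the EGF for the total number of periodic points over all mappings of size $n$. Extracting $[z^n]$ via the square-root singularity and dividing by $n! \cdot n^n$ produces the expected count $\sqrt{\pi n/8}$. The count of $k$-th iterate image points is handled by an analogous marked GF in which the mapping is expressed as a cycle whose attached trees are themselves iterated images; the key recursion
\begin{equation*}
\tau_{k+1} = e^{-1 + \tau_k}, \qquad \tau_0 = 0,
\end{equation*}
arises because the limiting in-degree distribution at a uniformly random node is Poisson with mean $1$, so the probability that a node fails to be in $f^{k+1}(S)$ is the probability that all of its in-neighbors fail to be in $f^k(S)$, which Poissonizes to $e^{-(1-\tau_k)} = e^{-1+\tau_k}$. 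Combined with the functional identity $T = ze^T$, this gives the claimed $(1-\tau_k)\#S$ asymptotic.

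The hardest step is the maximum tail length. Here one must analyze the height of a random vertex in the functional graph, which by the cycle-of-trees decomposition reduces to the joint distribution of tree heights and within-tree depths. Using Flajolet and Odlyzko's analysis of extremal parameters in simply generated trees, the height of a single random tree of size $n$ concentrates at scale $\sqrt{n}$ with a theta-function limit profile; an extreme-value calculation over the (typically $O(\sqrt{n})$) trees attached to the cycles then produces the maximum tail as $c\sqrt{n}$. The constant $c = \log(2)\sqrt{2\pi}$ is extracted from the Gumbel-type tail of the theta limit together with the Rayleigh distribution of the largest tree size. I expect the delicate point to be establishing sufficiently uniform tail estimates so that the expectation of the maximum, rather than merely its typical order, has the stated leading constant; this is where one needs the full strength of the transfer theorem and a careful saddle-point argument.
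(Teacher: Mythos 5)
The paper does not prove this statement: it is quoted directly from Flajolet and Odlyzko's \emph{Random Mapping Statistics}, which the paper cites as the source. Your sketch is therefore not being compared against any argument in the paper; it is an account of the cited external proof. As such an account it is broadly faithful to the Flajolet--Odlyzko methodology (symbolic method, cycle-of-rooted-trees decomposition, $T(z)=ze^{T(z)}$, dominant square-root singularity of $T$ at $z=1/e$, bivariate marking, transfer theorems), so the overall plan is the right one.

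A few substantive remarks on the details. Your generating function for the cumulative number of cyclic points, $T(z)/(1-T(z))^2$, is correct, but push the singularity analysis through: since $1-T(z)\sim\sqrt{2(1-ez)}$, one gets $T/(1-T)^2\sim 1/(2(1-ez))$, hence $[z^n]\sim e^n/2$ and an expected cyclic-node count of $n!\,e^n/(2n^n)\sim\sqrt{\pi n/2}$. This is the constant in Flajolet--Odlyzko; the $\sqrt{\pi n/8}$ displayed in the theorem as stated in the paper is the constant for the expected \emph{tail length} (and expected cycle length) seen from a uniformly random starting node, not for the number of periodic points, and appears to be a transcription slip in the paper rather than a correct target for your computation. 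Your Poisson heuristic for the $k$-th iterate image gives the right recursion $\tau_{k+1}=e^{-1+\tau_k}$, but as written it is a plausibility argument; a proof along Flajolet--Odlyzko's lines replaces it by an explicit family of generating functions for $k$-th iterate points and a transfer-theorem coefficient extraction, and you would need to do the same to turn the Poissonization into a theorem. Finally, for the maximum tail you correctly identify the difficulty, but ``theta-function limit'' plus ``Gumbel tail'' does not by itself produce the constant $\log 2\sqrt{2\pi}$: one needs a uniform double-exponential estimate for the height distribution of simply generated (Cayley) trees, an extreme-value argument over the $\Theta(\sqrt n)$ trees hanging off the cycles weighted by the Rayleigh-distributed largest tree size, and a Mellin-transform evaluation to extract the leading constant. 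Those steps are the real content of Flajolet--Odlyzko's Theorem 4 and are missing from the sketch, so the hardest claim remains unproved at the level of detail you give.
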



\subsection{Power Maps}
    Consider the multiplicative group $\mathbb{G}_m(K) = K^{\ast}$. Its endomorphism ring is $\Z$:
    \begin{align*}
        \Z &\to \End(\mathbb{G}_m)\\
        d &\mapsto z^d.
    \end{align*}
    Hence, the powering maps are the endomorphism of an underlying group.

    \begin{exmp}
        The squaring map $P_2(x) =x^2$ is the basis of the Pepin primality test for Fermat numbers.
    \end{exmp}

    There is a number of results for the powering map in dimension 1; for example \cite{Chou,Lucheta,Sha,Vasiga, Wilson}. We summarize the results applicable to our problem in the following proposition. Recall that a vertex of a tree is a \emph{leaf} it is degree $1$. A vertex is degree $1$, in our case, if it has no preimages.
    \begin{prop} \label{prop_power}
        Let $p$ be a prime number and let $d>1$ be an integer. Let $P_d: \A^1 \to \A^1$ be the map $P_d(z) = z^d$. Let $m^{-}$ be the integer part of $p-1$ relatively prime to $d$ and $m^{+}$ be the integer part of $p+1$ relatively prime to $d$. Then
        \begin{align*}
            \#\Per(P_d,\A^1(\F_p)) &= m^-+1\\
            \#\Per(P_d,\P^1(\F_p)) &= m^-+2\\
            \max_x(t(P_d,x)) &= \max_q\left(\left\lceil \frac{v_q(p-1))}{v_q(d)}\right\rceil \right) \text{ for $q$ a prime divisor of $d$},
        \end{align*}
        where $v_q$ is the valuation with respect to $q$.
        Furthermore, at least $(p-1)/2$ points are leaves.
    \end{prop}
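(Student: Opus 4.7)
The plan is to exploit the fact that $\F_p^* = \G_m(\F_p)$ is cyclic of order $p-1$, so the dynamics of the $d$th power map reduces entirely to arithmetic of exponents modulo $p-1$. I would handle the three claims in sequence.

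First, for the periodic point counts, I would decompose $p-1 = m^- \cdot m_d$, where $m_d$ is the largest divisor of $p-1$ whose prime factors all divide $d$ (so $\gcd(m^-,d)=1$). A point $z\in\F_p^*$ of multiplicative order $e$ satisfies $P_d^n(z)=z$ for some $n\geq 1$ iff $e\mid d^n-1$ for some $n$, equivalently iff $d$ is a unit modulo $e$, iff $\gcd(d,e)=1$, iff $e\mid m^-$. Since $\F_p^*$ is cyclic, exactly $m^-$ of its elements have order dividing $m^-$, so together with the fixed point $z=0$ this yields $\#\Per(P_d,\A^1(\F_p)) = m^-+1$. The projective statement follows by adjoining the totally ramified fixed point at infinity for $\#\Per(P_d,\P^1(\F_p)) = m^-+2$.

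Next, for $z\in\F_p^*$ of order $e$, its tail $t(P_d,z)$ is the least $m\geq 0$ such that $z^{d^m}$ is periodic, equivalently (by the preceding step) the least $m$ such that the order of $z^{d^m}$ is coprime to $d$. Since that order equals $e/\gcd(e,d^m)$, the coprimality condition becomes $v_q(e)\leq m\cdot v_q(d)$ for every prime $q\mid d$, and thus $t(P_d,z) = \max_{q\mid d}\lceil v_q(e)/v_q(d)\rceil$. This quantity is maximized over all $z$ when $e=p-1$, which is achieved by any primitive root modulo $p$; this delivers the claimed formula for $\max_x t(P_d,x)$.

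Finally, a point $z\in\F_p^*$ is a leaf iff it is not a $d$th power in $\F_p^*$, and the subgroup of $d$th powers has index $g := \gcd(d,p-1)$ in $\F_p^*$, so there are precisely $(p-1)(g-1)/g$ leaves in $\F_p^*$. When $g\geq 2$ this is at least $(p-1)/2$, as claimed. The argument throughout is straightforward cyclic-group arithmetic; the main subtlety to watch is the careful bookkeeping between the $q$-adic valuations of $d$ and of the orders of elements. This is not a genuine obstacle but is where one must be most careful in writing out the argument cleanly.
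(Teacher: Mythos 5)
Your proof is correct and, importantly, it is self-contained: you derive all three statements directly from the cyclic structure of $\F_p^*$ and $q$-adic bookkeeping on exponents, whereas the paper simply cites the relevant results from the literature (Lucheta for the periodic-point count and the leaf count, Ahmad-Syed for the tail formula) and only supplies the short bookkeeping for the extra fixed points at $0$ and $\infty$. The two routes thus buy different things: the paper's is economical and transparent about provenance, while yours makes the underlying mechanism visible, namely that for $z$ of order $e$ periodicity is governed by $\gcd(e,d)=1$, the tail by $\max_{q\mid d}\lceil v_q(e)/v_q(d)\rceil$, and the leaf count by the index $\gcd(d,p-1)$ of the subgroup of $d$th powers. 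One small point your writeup handles more carefully than the proposition's statement: the ``at least $(p-1)/2$ leaves'' claim requires $\gcd(d,p-1)>1$ (otherwise $P_d$ is a permutation of $\F_p^*$ and has no leaves), and you correctly flag that hypothesis when deducing the bound from $(p-1)(g-1)/g$ with $g=\gcd(d,p-1)\geq 2$.
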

    \begin{proof}
        From \cite[Corollary 16]{Lucheta}, we know the number of periodic points of the powering map on $\F_p^{\ast}$ is $m^-$. For affine space, $0$ is fixed by $P_d$ so we get $m^-+1$. For projective space, the point at infinity is also fixed.

        The max tail length is proven by Ahmad-Syed \cite[Theorem 4.5]{Ahmad}.

        The number of leaves is proven by Lucheta \cite[Corollary 23]{Lucheta}.
    \end{proof}
%
%
%
%
%
%
%
%
%

\subsection{Chebyshev Polynomials}
    The multiplicative group $\mathbb{G}_m$ has a nontrivial automorphism $z \mapsto z^{-1}$. Taking the quotient of $\mathbb{G}_m$ by this automorphism, we have an isomorphism with affine space
    \begin{align*}
        \mathbb{G}_m/\{z \sim z^{-1}\} &\xrightarrow{\sim} \A^1\\
        z &\mapsto z + z^{-1}.
    \end{align*}
    Since the automorphism $z \mapsto z^{-1}$ commutes with the power map $P_d(z) = z^d$, the polynomial $P_d(z)$ induces a map on $\A^1$ that satisfies
    \begin{equation*}
        T_d(z+z^{-1}) = z^d + z^{-d}.
    \end{equation*}
    The maps $T_d(z)$ can be shown to be monic polynomials of degree $d$, called the Chebyshev polynomials (of the first kind). Hence, the Chebyshev polynomials are the endomorphism of an underlying group.

    \begin{exmp}
        The properties of the first Chebyshev polynomial $T_2(x) = x^2-2$ form the basis for the Lucas-Lehmer primality test for Mersenne numbers.
    \end{exmp}

    Gassert \cite{Gassert} studied $T_q$ over $\F_{p^n}$ for $q$ prime. However, for Sections \ref{sect_dim1_alg} and \ref{sect_dimN_alg}, we need the number of periodic points of $T_d$ for composite $d$, so now prove the appropriate generalizations.

    The key property is that Chebyshev polynomials commute, i.e., for any positive integers $a,b$
    \begin{equation*}
        T_a \circ T_b = T_b \circ T_a = T_{ab}.
    \end{equation*}

    \begin{lem}\label{lem_cheby_1}
         Let $a,b$ be positive integers. A point is periodic for $T_a \circ T_b$ if and only if it is periodic for both $T_a$ and $T_b$ (not necessary of the same period).
    \end{lem}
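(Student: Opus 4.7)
The plan is to exploit the commutation identity $T_a \circ T_b = T_b \circ T_a = T_{ab}$ recalled just before the statement, together with the elementary fact that on a finite set $S$ the descending chain $\mathrm{Im}(T_a) \supseteq \mathrm{Im}(T_a^2) \supseteq \cdots$ stabilizes, and that its eventual value coincides with $\Per(T_a, S)$.

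For the ``if'' direction, I would suppose $T_a^n(x) = x$ and $T_b^m(x) = x$. Since commuting maps satisfy $(T_a \circ T_b)^k = T_a^k \circ T_b^k$, applying this with $k = nm$ gives
\begin{equation*}
(T_a \circ T_b)^{nm}(x) = T_a^{nm}\bigl(T_b^{nm}(x)\bigr) = T_a^{nm}(x) = x,
\end{equation*}
which shows $x$ is periodic for $T_a \circ T_b$.

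For the ``only if'' direction, suppose $(T_a \circ T_b)^n(x) = x$. Iterating yields, for every $k \geq 1$,
\begin{equation*}
x = (T_a \circ T_b)^{kn}(x) = T_a^{kn}\bigl(T_b^{kn}(x)\bigr),
\end{equation*}
exhibiting $x \in \mathrm{Im}(T_a^{kn})$ for all $k$. Invoking the stabilization fact then places $x$ in $\Per(T_a, S)$. The symmetric argument, interchanging the roles of $a$ and $b$, gives $x \in \Per(T_b, S)$.

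I do not anticipate a serious obstacle. The only step that is not routine bookkeeping is the characterization $\bigcap_k \mathrm{Im}(T_a^k) = \Per(T_a, S)$ on a finite set, which follows immediately from the fact that the stabilized image is $T_a$-invariant and $T_a$ restricted to it is a surjection of a finite set onto itself, hence a bijection. I would either cite this standard observation or include it as a one-line aside.
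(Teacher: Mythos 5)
Your ``if'' direction is essentially the paper's argument: both exploit the commutation identity $T_{ab}^k = T_a^k \circ T_b^k$, the paper with exponent $\lcm(m,n)$ and you with $nm$, which is immaterial. The ``only if'' direction, however, takes a genuinely different route. The paper argues by contradiction: assuming $z$ is not periodic for $T_a$, it asserts that $\bigcup_{k\geq 1} T_b^{-kn}(z)$ is infinite, which contradicts the field being finite. You instead observe that $x = T_a^{kn}\bigl(T_b^{kn}(x)\bigr)$ places $x$ in the image of $T_a^{kn}$ for every $k$, and then invoke the elementary fact that on a finite set the decreasing chain of images stabilizes at a set on which $T_a$ acts as a permutation, so $\bigcap_k \mathrm{Im}(T_a^k) = \Per(T_a,S)$. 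Your version is cleaner and easier to verify: the paper's assertion that the preimage union is infinite is not carefully justified (the witnesses $T_a^{kn}(z)$ live in a finite set and need not be distinct even when $z$ fails to be $T_a$-periodic, and the preimage sets $T_b^{-kn}(z)$ for varying $k$ are not nested), whereas the image-stabilization fact is standard and fully spelled out. Both proofs rest on the same two ingredients, commutativity and finiteness, but your packaging of the finiteness input is more robust and would be the safer choice if one wanted an airtight write-up.
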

    \begin{proof}
        One direction is trivial. If a point $z$ is periodic for both $T_a$ and $T_b$ with minimal periods $m,n$, respectively, then
        \begin{equation*}
            T_{ab}^{\lcm(m,n)}(z) = T_a^{\lcm(m,n)}(T_b^{\lcm(m,n)}(z)) = T_a^{\lcm(m,n)}(z) = z.
        \end{equation*}

        Now assume there is a point $z$ that is periodic for $T_{ab}$ with minimal period $n$. Assume that $z$ is not periodic for $T_a$. Then
        \begin{equation*}
            T_{ab}^{kn}(z) = z
        \end{equation*}
        for all positive integers $k$ and
        \begin{equation*}
            T_{ab}^{kn}(z) = T_b^{kn}(T_a^{kn}(z)) = z.
        \end{equation*}
        In particular, the set $\cup_{k \geq 1}T_b^{-kn}(z)$ is infinite. Since our field is finite, this is a contradiction. Hence, $z$ is periodic for $T_a$.

        Reversing the order of $T_a$ and $T_b$, we make the same argument to see that $z$ is also periodic for $T_b$.
    \end{proof}
    \begin{thm} \label{thm_cheby_dim1_count}
        Let $p$ be a prime number and let $d>1$ be an integer. Let $T_d: \A^1 \to \A^1$ be the $d$th Chebyshev polynomial of the first kind. Let $m^{-}$ be the integer part of $p-1$ relatively prime to $d$ and $m^{+}$ be the integer part of $p+1$ relatively prime to $d$.
        The number of periodic points for $T_d$ is
        \begin{equation*}
            \Per(T_d,\A^1(\F_p)) = \frac{m^- + m^+}{2}
        \end{equation*}
        and
        \begin{equation*}
            \Per(T_d,\P^1(\F_p)) = \frac{m^- + m^+}{2} +1.
        \end{equation*}
    \end{thm}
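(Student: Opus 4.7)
My plan is to use the standard semiconjugation $\pi(z) = z + z^{-1}$ (so $\pi \circ P_d = T_d \circ \pi$) to transport the count upstairs to the multiplicative group. First I would identify the $\F_p$-rational locus of this cover. If $x = z + z^{-1} \in \F_p$, then $z$ satisfies $t^2 - xt + 1 = 0$ over $\F_p$, so $z \in \F_{p^2}^{*}$. Factoring the relation $z^p + z^{-p} = z + z^{-1}$ as $z^{-p}(z^{p-1}-1)(z^{p+1}-1) = 0$ shows
\[
\pi^{-1}(\A^1(\F_p)) \cap \F_{p^2}^{*} = H_1 \cup H_2,
\]
where $H_1 = \F_p^{*}$ is cyclic of order $p-1$ and $H_2 = \{z \in \F_{p^2}^{*} : z^{p+1}=1\}$ is the norm-one subgroup, cyclic of order $p+1$, with $H_1 \cap H_2 = \mu_2 := \{\pm 1\}$. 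Both subgroups are $P_d$-invariant and stable under inversion $\iota(z) = z^{-1}$. Since $\pi(z_1) = \pi(z_2)$ forces $z_1 z_2 = 1$, the map $\pi$ descends to a bijection $(H_1 \cup H_2)/\iota \xrightarrow{\sim} \A^1(\F_p)$ that intertwines $P_d$ with $T_d$.

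Next I would lift the notion of periodicity. Because $\iota$ commutes with $P_d$, the point $\pi(z)$ is $T_d$-periodic exactly when $P_d^n(z) \in \{z, z^{-1}\}$ for some $n \ge 1$; in the case $P_d^n(z) = z^{-1}$, applying $P_d^n$ again yields $P_d^{2n}(z) = z$. Thus $\pi(z)$ is $T_d$-periodic if and only if $z$ itself is $P_d$-periodic in $H_1 \cup H_2$, and it suffices to count $P_d$-periodic elements of $H_1 \cup H_2$ modulo $\iota$.

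For that I would invoke the standard cyclic-group fact also underlying Proposition \ref{prop_power}: in a finite cyclic group of order $N$, the $P_d$-periodic elements are exactly those whose order is coprime to $d$, and so their number equals the largest divisor of $N$ coprime to $d$. This gives $m^-$ periodic elements in $H_1$ and $m^+$ in $H_2$. Since $\mu_2$ lies in $H_1 \cap H_2$ and is pointwise $P_d$-fixed, inclusion-exclusion yields $m^- + m^+ - |\mu_2|$ periodic elements in $H_1 \cup H_2$. The $|\mu_2|$ points of $\mu_2$ are $\iota$-fixed while the remaining periodic elements split into inversion pairs, giving
\[
\#\Per(T_d, \A^1(\F_p)) = |\mu_2| + \frac{m^- + m^+ - 2|\mu_2|}{2} = \frac{m^- + m^+}{2},
\]
and the projective count follows by adding the totally ramified fixed point at infinity. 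The main obstacle is the descent step: one must keep straight which periodic elements are inversion-fixed, and verify that the formula is uniform across $p = 2$ (where $|\mu_2| = 1$) and odd $p$ (where $|\mu_2| = 2$), for which a brief parity check on $m^- + m^+$ ensures the quotient is always an integer.
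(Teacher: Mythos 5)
Your proof takes a genuinely different route from the paper. The paper factors $T_d = T_{q_1}\circ\cdots\circ T_{q_r}$ into prime-degree Chebyshev maps, proves that a point is $T_d$-periodic iff it is $T_{q_i}$-periodic for each $i$ (Lemma~\ref{lem_cheby_1}), and then plugs into Gassert's explicit formula for $\#\Per(T_q)$ with $q$ prime. You instead work upstairs: you identify $\pi^{-1}(\A^1(\F_p))\cap\F_{p^2}^{*}$ as $H_1\cup H_2$ via the factorization $(z^{p-1}-1)(z^{p+1}-1)=0$, lift $T_d$-periodicity to $P_d$-periodicity, count $P_d$-periodic elements in each cyclic group directly using the order-coprime-to-$d$ criterion, and quotient by inversion. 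Your argument is self-contained and handles composite $d$ in one stroke, avoiding the reduction to prime degree and the citation to Gassert; the paper's argument is shorter given Gassert's result and fits the ``composition'' machinery (Lemma~\ref{lem_cheby_1}) that the authors reuse for the tail-length computations.

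One small inaccuracy: you assert that $\mu_2$ is pointwise $P_d$-fixed and use $|\mu_2|$ both as the overlap term in inclusion--exclusion and as the number of $\iota$-fixed periodic points. When $d$ is even and $p$ is odd this is false: $(-1)^d=1\neq -1$, and indeed $-1$ has order $2$ with $\gcd(2,d)>1$, so $-1$ is strictly preperiodic, not periodic. The number of periodic elements in $H_1\cap H_2$ is then $1$, not $2$. Your final formula is nonetheless correct because the same quantity $f$ (periodic elements of $\mu_2$) appears as both the inclusion--exclusion correction and the count of $\iota$-fixed periodic orbits, and $f + \tfrac{m^-+m^+-2f}{2} = \tfrac{m^-+m^+}{2}$ is independent of $f$; but to make the descent step airtight you should replace the incorrect claim with this observation, or state explicitly that $f=1$ when $d$ is even (and $p$ odd or $p=2$) and $f=2$ when $d$ is odd and $p$ is odd, and note that either value yields the same total.
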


    \begin{proof}
        We consider the $T_q$ for the prime divisors $q$ of $d$.

        We know from Gassert \cite[Theorem 2.4]{Gassert}, that the number of periodic points of $T_q$ is determined by summing over divisors of $m_q^-$ and $m_q^+$, where $m_q^-$ the integer part of $p-1$ relatively prime to $q$ and $m_q^+$ is the integer part of $p+1$ relatively prime to $q$. In particular,
        \begin{equation*}
            \#\Per(T_q)(\A^1(\F_p)) =
            \begin{cases}
                1 + \sum_{k \mid m_q^-, k>2}\frac{\varphi(k)}{2} + \sum_{k \mid m_q^+, k>2} \frac{\varphi(k)}{2} & q=2\\
                2+ \sum_{k \mid m_q^-, k>2}\frac{\varphi(k)}{2} + \sum_{k \mid m_q^+, k>2} \frac{\varphi(k)}{2} & q \text{ odd,}
            \end{cases}
        \end{equation*}
        where $\varphi$ is the Euler-phi function.
        Writing $T_d$ as the composition
        \begin{equation*}
            T_d = T_{q_1} \circ \cdots \circ T_{q_r}
        \end{equation*}
        and applying Lemma \ref{lem_cheby_1}, we find that the periodic points of $T_d$ must be periodic for each prime $q_i$. In particular, we need the $k$ which divide $m_{q_i}$ for $1 \leq i \leq r$. Recalling that $m^{\pm}$ are defined as the integer part of $p \pm 1$ relatively prime to $d$, we have the total number of periodic points as
        \begin{equation*}
            \#\Per(T_d)(\A^1(\F_p)) =
            \begin{cases}
                1 + \sum_{k \mid m^-, k>2}\frac{\varphi(k)}{2} + \sum_{k \mid m^+, k>2} \frac{\varphi(k)}{2} & d \text{ even}\\
                2+ \sum_{k \mid m^-, k>2}\frac{\varphi(k)}{2} + \sum_{k \mid m^+, k>2} \frac{\varphi(k)}{2} & d \text{ odd}.
            \end{cases}
        \end{equation*}
        In particular, we have
        \begin{equation*}
            \#\Per(T_d)(\A^1(\F_p)) =
            \begin{cases}
                1 + \frac{m^- -1}{2} + \frac{m^+-1}{2} = \frac{m^-+m^+}{2} & d \text{ even}\\
                2 + \frac{m^- -2}{2} + \frac{m^+-2}{2} = \frac{m^-+m^+}{2} & d \text{ odd}.
            \end{cases}
        \end{equation*}

        For the projective count, we also have the fixed point at infinity.
    \end{proof}
    \begin{rem}
        Notice that if $T_d$ is a permutation, then $P_d$ must be a permutation.
    \end{rem}
    \begin{rem}
        In the case $\gcd(k,p-1)=\gcd(k,p+1)=1$, which will occur infinitely often when $k$ is prime, we have
        \begin{equation*}
            \frac{m^- + m^+}{2} = \frac{(p-1) + (p+1)}{2} = p
        \end{equation*}
        so that the map is a permutation.
    \end{rem}

    Similarly, we generalize Gassert's results for tails.
    \begin{lem}\label{lem_cheby_2}
        Let $a,b$ be positive integers. Then
        \begin{equation*}
            t(T_{ab}(z)) = \max(t(T_a,z),t(T_b,z)).
        \end{equation*}
    \end{lem}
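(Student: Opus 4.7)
The plan is to exploit the commutativity identity $T_a \circ T_b = T_b \circ T_a = T_{ab}$ together with Lemma \ref{lem_cheby_1}, which reduces periodicity under a composition to simultaneous periodicity under each factor.

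\emph{Upper bound.} Let $M = \max(t(T_a,z),t(T_b,z))$. Using commutativity,
\begin{equation*}
    T_{ab}^M(z) = T_b^M\bigl(T_a^M(z)\bigr) = T_a^M\bigl(T_b^M(z)\bigr).
\end{equation*}
The point $T_a^M(z)$ is $T_a$-periodic by choice of $M$. Since $T_b$ commutes with $T_a$, it sends the set of $T_a$-periodic points to itself, and iterating so does $T_b^M$; hence $T_{ab}^M(z)$ is $T_a$-periodic. The symmetric argument shows it is also $T_b$-periodic, and Lemma \ref{lem_cheby_1} then yields that $T_{ab}^M(z)$ is $T_{ab}$-periodic, so $t(T_{ab},z) \le M$.

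\emph{Lower bound.} Set $L = t(T_{ab},z)$, so that by Lemma \ref{lem_cheby_1} the point $T_{ab}^L(z) = T_b^L(T_a^L(z))$ is $T_a$-periodic and $T_b$-periodic. To match the upper bound I must upgrade this to the statement that $T_a^L(z)$ itself is $T_a$-periodic (and symmetrically for $T_b$), which would give $t(T_a,z)\le L$.

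\emph{Main obstacle.} The map $T_b^L$ need not be injective, so the $T_a$-periodicity of $T_b^L(T_a^L(z))$ does not a priori imply that of $T_a^L(z)$. To handle this I would lift the problem along the two-to-one semiconjugacy $\pi\colon w\mapsto w+w^{-1}$: for $z = \pi(w)$ with $w \in \overline{\F}_p^{\times}$, one has $T_d^k(z) = \pi(w^{d^k})$, so the $T_d$-orbit of $z$ is the image under $\pi$ of the $P_d$-orbit of $w$. The tail $t(T_d,z)$ is then controlled by the $d$-primary part of the order $n$ of $w$ as the least $m$ for which $d^m$ annihilates this part, namely $\max_{q\mid d}\lceil v_q(n)/v_q(d)\rceil$. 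Substituting $a$, $b$, and $ab$ in turn reduces the remaining inequality to a prime-by-prime comparison of $q$-adic valuations, which I would then carry out to finish the proof.
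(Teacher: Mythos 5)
Your upper-bound argument is correct and fills in a gap the paper leaves implicit: the paper's phrase ``$z$ is periodic for $T_{ab}^w$'' is a slip for ``$T_{ab}^w(z)$ is periodic for $T_{ab}$,'' and your observation that commutativity makes $T_b$ carry $T_a$-periodic points to $T_a$-periodic points is exactly what is needed to invoke Lemma \ref{lem_cheby_1}. You have also correctly identified that the lower bound is where the real difficulty lies, and the non-injectivity obstruction you name is precisely where the paper's own proof breaks: the fact that $T_a^{w'}(z)$ is not $T_a$-periodic does not prevent $T_{ab}^{w'}(z)=T_b^{w'}(T_a^{w'}(z))$ from being $T_{ab}$-periodic, since $T_b^{w'}$ may collapse a strictly preperiodic point onto a periodic one, and Lemma \ref{lem_cheby_1} says nothing about that.

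However, you stop short of the final computation, and carrying it out shows the lower bound is \emph{false} in general. Under the semiconjugacy $\pi(w)=w+w^{-1}$, a point $z=\pi(w)$ with $\ord(w)=n$ satisfies $t(T_d,z)=\max_{q\mid d}\left\lceil v_q(n)/v_q(d)\right\rceil$. When a prime $q$ divides both $a$ and $b$, one has $v_q(ab)=v_q(a)+v_q(b)$, so the $q$-contribution to $t(T_{ab},z)$ is \emph{smaller} than the corresponding contributions to $t(T_a,z)$ and $t(T_b,z)$, not larger. Concretely, over $\F_5$ take $z=0=\pi(2)$ with $\ord(2)=4$ and $a=b=2$: the $T_2$-orbit $0\to 3\to 2\to 2$ gives $t(T_2,0)=2$, while the $T_4$-orbit $0\to 2\to 2$ gives $t(T_4,0)=1$, contradicting the claimed equality. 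The statement, your strategy, and the paper's intended argument all go through once one adds the hypothesis $\gcd(a,b)=1$; correspondingly, the downstream application should decompose $T_d$ into Chebyshev polynomials of \emph{coprime prime-power} index rather than iterating over prime indices with multiplicity.
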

    \begin{proof}
        Let $w = \max(t(T_a,z),t(T_b,z))$. Then $T_a^w(z)$ and $T_b^w(z)$ are both periodic. By Lemma \ref{lem_cheby_1} $z$ is periodic for $T_{ab}^w$. Therefore, $t(T_{ab},z) \leq \max(t(T_a,z),t(T_b,z))$.

        Similarly, for any $w' < w$, $T_a^{w'}(z)$ or $T_b^{w'}(z)$ is not periodic, by Lemma \ref{lem_cheby_1}, $z$ is not periodic for $T_{ab}^{w'}$. Therefore, $t(T_{ab},z) \geq \max(t(T_a,z),t(T_b,z))$.
    \end{proof}
    \begin{prop}
        Let $d>1$ be a positive integer. Let $T_d: \A^1 \to \A^1$ be the $d$th Chebyshev polynomial of the first kind. Let $m^{-}$ be the integer part of $p-1$ relatively prime to $d$ and $m^{+}$ be the integer part of $p+1$ relatively prime to $d$. Then,
        \begin{equation*}
           \max_x(t(T_d,x)) =\max_q\left(\max\left(
               \left\lceil \frac{v_q(p-1))}{v_q(d)}\right\rceil\right), \max\left(
               \left\lceil \frac{v_q(p+1))}{v_q(d)}\right\rceil\right)
               \right)
        \end{equation*}
        where $q$ ranges over the prime divisors of $d$. 
    \end{prop}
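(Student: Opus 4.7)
The plan is to reduce the max-tail computation to prime-power pieces by iterating Lemma \ref{lem_cheby_2} along the prime factorization of $d$, and then to handle each prime power by recognizing $T_{q^a}$ as an iterate of $T_q$ and invoking the prime-degree Chebyshev tail result.

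Write $d = q_1^{a_1} \cdots q_r^{a_r}$. Using the composition identity $T_{mn} = T_m \circ T_n$, decompose $T_d = T_{q_1^{a_1}} \circ \cdots \circ T_{q_r^{a_r}}$. Since the degrees $q_i^{a_i}$ are pairwise coprime, iterating Lemma \ref{lem_cheby_2} along this decomposition gives
\begin{equation*}
    t(T_d, z) = \max_{1 \leq i \leq r} t(T_{q_i^{a_i}}, z),
\end{equation*}
so that $\max_x t(T_d, x) = \max_i \max_x t(T_{q_i^{a_i}}, x)$. This reduces the problem to computing $\max_x t(T_{q^a}, x)$ for each prime power $q^a$ exactly dividing $d$.

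For the prime-power step, observe that $T_{q^a}$ is the $a$-fold iterate of $T_q$. For any self-map $g$ of a finite set, the set of periodic points of $g$ coincides with the set of periodic points of $g^a$, and consequently $t(g^a, x) = \lceil t(g,x)/a \rceil$. Applied to $g = T_q$ together with the monotonicity of the ceiling,
\begin{equation*}
    \max_x t(T_{q^a}, x) = \left\lceil \frac{\max_x t(T_q, x)}{a} \right\rceil.
\end{equation*}
Gassert's analysis of $T_q$ for prime $q$ gives $\max_x t(T_q, x) = \max(v_q(p-1), v_q(p+1))$; alternatively, one can derive this from the double cover $z \mapsto z + z^{-1}$ combined with Proposition \ref{prop_power} applied to $P_q$ on $\F_p^\ast$ and on the norm-one subgroup of $\F_{p^2}^\ast$. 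Using the identity $\lceil \max(u,v)/a \rceil = \max(\lceil u/a \rceil, \lceil v/a \rceil)$ and setting $a = v_q(d)$ yields the prime-power contribution $\max(\lceil v_q(p-1)/v_q(d) \rceil, \lceil v_q(p+1)/v_q(d) \rceil)$; taking the maximum over $q \mid d$ produces the claimed formula.

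The delicate point is that Lemma \ref{lem_cheby_2} must be applied only to pairs of coprime factors, so the correct decomposition is by distinct primes rather than by listing each prime with multiplicity. Naively writing $T_{q^a}$ as $a$ copies of $T_q$ and applying the lemma would give the incorrect answer $\max_x t(T_q, x)$ and lose the ceiling factor $a$ that produces the denominator $v_q(d)$ in the stated formula; the prime-power case must instead be handled through the iteration identity $T_{q^a} = T_q^a$ and the observation about tails of iterates.
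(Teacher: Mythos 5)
Your proof is correct, and it is actually more careful than the paper's own argument. The paper dispatches this proposition in one line: ``Apply Gassert's Theorem 2.3 to each $T_q$ for $q$ dividing $d$ and combine with Lemma~\ref{lem_cheby_2}.'' Taken at face value, that recipe gives $\max_x t(T_d,x) = \max_q \max(v_q(p-1), v_q(p+1))$ over primes $q \mid d$, which does \emph{not} match the stated formula: the factor $v_q(d)$ in the denominator is lost. The missing ingredient is precisely what you supplied, namely the treatment of prime powers via the iterate identity $T_{q^a} = T_q^a$ together with $t(g^a,x) = \lceil t(g,x)/a \rceil$, followed by $\lceil \max(u,v)/a\rceil = \max(\lceil u/a\rceil, \lceil v/a\rceil)$.

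You are also right to flag the coprimality issue. Lemma~\ref{lem_cheby_2} as printed has no coprimality hypothesis, but it is false without one: taking $a = b = 2$ and $p = 17$, a point $z$ with $t(T_2,z) = 2$ has $t(T_4,z) = t(T_2^2,z) = 1$, not $\max(2,2) = 2$. The gap in the paper's proof of that lemma is the ``$\geq$'' direction: from $T_a^{w'}(z)$ not being periodic for $T_a$, one cannot conclude that $T_b^{w'}(T_a^{w'}(z))$ fails to be periodic for $T_a$, since applying $T_b^{w'}$ can advance the point into the periodic part when $\gcd(a,b) > 1$. Under coprimality the argument does go through (for instance via the lift to power maps on $\F_p^\ast$ and the norm-one subgroup of $\F_{p^2}^\ast$, where raising to a $b^{w'}$-th power cannot kill the $q$-part of the order for $q \mid a$), which is exactly the setting in which you invoke it. In short: your decomposition of $d$ into distinct prime powers, your use of the (corrected) coprime version of Lemma~\ref{lem_cheby_2}, and your iterate-tail computation together give a complete proof, and they repair a genuine lacuna in both the lemma's statement and the proposition's cited justification.
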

    \begin{proof}
        Apply Gassert \cite[Theorem 2.3]{Gassert} to each $T_q$ for $q$ dividing $d$ and combine with Lemma \ref{lem_cheby_2}.
    \end{proof}

    Unlike for power maps, most Chebyshev polynomials of prime degree do not have at least half of the points as leaves.
    \begin{prop}
        Let $d$ be a prime. Let $T_d: \A^1 \to \A^1$ be the $d$th Chebyshev polynomial of the first kind. Then for $d=2$ and any prime $p$ or for pairs $(d,p)$ with $2d=p+1$ and $p$ prime, we have $\frac{p-1}{2}$ leaves.
    \end{prop}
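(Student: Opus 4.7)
The plan is to count the image $T_d(\A^1(\F_p))$ directly, since the leaves are precisely the points with no preimage, so the number of leaves equals $p - \#T_d(\A^1(\F_p))$. Throughout, I assume $p$ is odd so that $(p-1)/2$ is an integer.

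For the case $d = 2$, this is essentially immediate. Since $T_2(x) = x^2 - 2$, the image is a translate of the set of squares in $\F_p$, and there are $(p+1)/2$ squares in $\F_p$ (counting $0$). Hence $\#T_2(\A^1(\F_p)) = (p+1)/2$ and the number of leaves is $(p-1)/2$.

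For the case $2d = p+1$ with $d$ an odd prime, I would use the standard semiconjugacy $T_d(z + z^{-1}) = z^d + z^{-d}$ to partition $\A^1(\F_p) = A \sqcup B$, where $A = \{z + z^{-1} : z \in \F_p^{\ast}\}$ (equivalently, the $x$ with $x^2 - 4$ a square in $\F_p$) and $B$ consists of the remaining points, parameterized by $w \in \mu_{p+1} \setminus \{\pm 1\} \subset \F_{p^2}^{\ast}$, where $\mu_{p+1}$ is the norm-one subgroup of order $p+1$. A direct count gives $|A| = (p+1)/2$ and $|B| = (p-1)/2$. I would then analyze $T_d$ on each piece. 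On $A$: since $\gcd(d, p-1) = \gcd(d, 2(d-1)) = 1$ (using that $d$ is an odd prime), the $d$th power map is a bijection on $\F_p^{\ast}$, so $T_d$ permutes $A$. On $B$: since $\mu_{p+1}$ has order $p+1 = 2d$, the $d$th power map sends $\mu_{p+1}$ into $\mu_2 = \{\pm 1\} \subset \F_p^{\ast}$, so $T_d(B) \subseteq \{1+1, -1 + (-1)^{-1}\} = \{2, -2\} \subset A$. Combining, $T_d(\A^1(\F_p)) = A$, giving $p - (p+1)/2 = (p-1)/2$ leaves.

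The main obstacle is recognizing in the second case that $T_d$ collapses the entire set $B$ into just two points of $A$, rather than permuting $B$ as the symmetry of the construction might naively suggest. This collapse hinges on $d = (p+1)/2$ being exactly half the order of the norm-one subgroup, which forces $w^d \in \mu_2 \subset \F_p$ for every $w \in \mu_{p+1}$. Once this is understood, the image size and the leaf count follow immediately.
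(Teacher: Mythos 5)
Your proof is correct and takes a genuinely different route from the paper's. The paper proves this proposition by appealing to Gassert's explicit counting formulas for the number of points at each height (tail length) in the preperiodic trees: for $d=2$ it tallies leaves at the top height $\nu = \max(v_2(p-1),v_2(p+1))$ plus the leaves at height $1$ and subtracts the non-leaves among them, and for $d>2$ it shows the count of points at maximal height is $(d-1)\frac{p\pm 1}{2d} \le \frac{p-1}{2}$ with equality precisely when $p+1=2d$. You instead compute $\#T_d(\A^1(\F_p))$ directly using the semiconjugacy $z \mapsto z+z^{-1}$, partitioning $\A^1(\F_p)$ into $A$ (coming from $\F_p^{\ast}$, size $(p+1)/2$) and $B$ (coming from $\mu_{p+1}\setminus\{\pm 1\}$, size $(p-1)/2$), and observing that $T_d$ permutes $A$ because $\gcd(d,p-1)=1$ while collapsing all of $B$ onto $\{2,-2\}\subset A$ because $w^d \in \mu_2$ for every $w\in\mu_{p+1}$. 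Your argument is more elementary and self-contained, avoids the black-box appeal to Gassert, and isolates the structural reason the collapse happens; the paper's approach is less transparent but slots naturally into the surrounding use of Gassert's height-counting machinery and makes visible that $p+1=2d$ is exactly the equality case of a general bound. One minor point you handle correctly and the paper leaves implicit: the statement requires $p$ odd (for $p=2$, $T_2$ is a permutation and $(p-1)/2$ is not an integer), which is automatic when $p+1=2d$ with $d>2$ but must be assumed for $d=2$.
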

    \begin{proof}
        Since we are considering only prime $d$, we can appeal directly to the counting in Gassert \cite[Theorems 2.3, 2.4]{Gassert}. For $d=2$, the number of points at height $k$ is given in Gassert as
        \begin{equation*}
            \begin{cases}
              2^{k-2}m^{\pm} & k \geq 2\\
              \frac{m^- + m^+}{2} & k=1
            \end{cases}
        \end{equation*}
        with the $\pm$ depending on whether $v_2(p-1)$ or $v_2(p+1)$ is larger. Let $\nu = \max(v_2(p-1), v_2(p+1)) \geq 2$. Since $\gcd(p-1,p+1) = 2$, we also have $\min(v_2(p-1), v_2(p+1))=1$. Then we have the number of leaves at height $\nu$ as
        \begin{equation*}
            2^{\nu-2}m^{\pm} = \frac{p\pm 1}{4}.
        \end{equation*}
        The rest of the leaves occur at height $1$ and there are $\frac{m^-+m^+}{2}$ of those. Of those, we need to know which are not leaves. All leaves map $2$-to-$1$ except for the cycle $0 \to -2 \to [2 \to 2]$. So we have the number of leaves as
        \begin{align*}
            \frac{p \pm 1}{4} + \frac{m^- + m^+}{2} - \frac{1}{2}\left(\frac{p \pm 1}{4} \cdot \frac{1}{2^{\nu-2}} -1 \right)
            &= \frac{p \pm 1}{4} + \frac{(p \pm 1)/2^\nu + (p \mp 1)/2}{2} - \frac{p \pm 1}{2^{\nu-1}}  -\frac{1}{2}\\
            &= \frac{p \pm 1}{4} + \frac{(p \pm 1)}{2^{\nu+1}} + \frac{p \mp 1}{4} - \frac{p \pm 1}{2^{\nu+1}}  -\frac{1}{2}\\
            &= \frac{p \pm 1}{4} + \frac{p \pm 1}{4} - \frac{1}{2}\\
            &= \frac{p-1}{2}.
        \end{align*}

        For $d  > 2$ we have the number of points at maximal height as
        \begin{equation*}
            (d-1)d^{\max(v_d(p-1),v_d(p+1))-1}\frac{m^{\pm}}{2} = (d-1) \frac{p\pm 1}{2d} \leq \frac{p-1}{2},
        \end{equation*}
        with equality only for $p+1 = 2d$. There are no leaves of smaller height since $d>2$ divides only one of $p\pm 1$.
    \end{proof}
    Note that by Lemma \ref{lem_cheby_1} the number of leaves can only increase for composite $d$. So we can have $T_d$ with at least half leaves for other composite choices of $(d,p)$.

%
%
%

\section{Differentiating maps in dimension 1} \label{sect_dim1_alg}
    On the set of degree $d$ self-maps on $\P^N$ there is a natural conjugation action by elements of $\PGL_{N+1}$. For $F:\P^N \to \P^N$ define
    \begin{equation*}
        F^{\alpha} = \alpha^{-1} \circ F \circ \alpha, \quad \alpha\in \PGL_{N+1}.
    \end{equation*}
    This action preserves the dynamical properties of $F$ since $(F^{\alpha})^n = (F^n)^{\alpha}$ and is the dynamical system equivalent of change of variables. It induces a similar action for self-maps of affine space $\A^N$. Consequently, while we may distinguish the powering map $x^n$ as special, there is actually an entire conjugacy class of maps that has the same dynamical properties as the powering map. For example,
    \begin{equation*}
        f(x) = \frac{x^2}{2x^2 - 2x + 1}
    \end{equation*}
    is conjugate to the powering map by the linear fractional transformation
    \begin{equation*}
        \alpha = \frac{1-x}{x} \in \PGL_2.
    \end{equation*}
    For applications, it is important to recognize when a given function is of certain type, such as a powering map or Chebyshev polynomial. Since there are only two special conjugacy classes of polynomial maps of each degree in dimension 1, we could use an algorithm from Faber-Manes-Viray \cite{FMV} to determine if a given map is conjugate to the appropriate powering or Chebyshev polynomial. While this algorithm is sufficient in dimensional 1, albeit quite slow for large degree, we will see in Section \ref{sect_dimN_alg} that it is not sufficient for higher dimensions because there are infinitely many distinct conjugacy classes with the same (asymptotic) cycle statistics.
    As an alternative, we use the results of Section \ref{sect_dim1} to formulate an algorithm for distinguishing the different polynomial maps in dimension $1$ through cycle statistics. We need to distinguish three separate cases:
    \begin{enumerate}
        \item polynomials that behave like random maps,
        \item the powering maps, and
        \item the Chebyshev polynomials.
    \end{enumerate}
    Theorem \ref{thm_rnd_dim1}, Proposition \ref{prop_power}, and Theorem \ref{thm_cheby_dim1_count} provide an order of growth for the number of periodic points of random maps and exact counts of periodic points for powering and Chebyshev polynomials. Consequently, by choosing a sequence of primes all of a certain form, we can distinguish between the types of maps. The following Lemma gives two examples of primes with particularly useful forms for $d$ even and odd, respectively.

    \begin{lem} \label{lem_dim1_prime_forms}
        Let $d \geq 2$ be an integer.
        \begin{enumerate}
            \item For Mersenne primes, $p=2^q-1$ and $d = 2^k$ a power of $2$, we have
                \begin{equation*}
                    m^+ = 1 \quad \text{and} \quad m^- = 2^{q-1}-1.
                \end{equation*}

            \item For Sophie Germain primes, $p=2q+1$ where $q$ is prime and $d$ is even, we have
                \begin{equation*}
                    m^- = q \quad \text{for}\quad q \nmid d.
                \end{equation*}


            \item For primes of the form $p \equiv -1 \pmod{d}$ and $d$ odd, we have
                \begin{equation*}
                    m^- = p-1.
                \end{equation*}
        \end{enumerate}
    \end{lem}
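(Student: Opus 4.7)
The plan is to dispatch each of the three items by a direct arithmetic computation, using the definition that $m^-$ (respectively $m^+$) is obtained from $p-1$ (respectively $p+1$) by dividing out every prime-power factor shared with $d$. Equivalently, I would write $p \mp 1 = m^{\mp} \cdot e^{\mp}$ where $\gcd(m^{\mp}, d) = 1$ and every prime dividing $e^{\mp}$ also divides $d$.

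For part (1), since $d = 2^k$ the only relevant prime is $2$, so the task reduces to stripping all factors of $2$ from $p \pm 1$. From $p = 2^q - 1$ I compute $p + 1 = 2^q$, which yields $m^+ = 1$ immediately. For $m^-$, I factor $p - 1 = 2(2^{q-1} - 1)$ and note that $2^{q-1} - 1$ is odd (Mersenne primes force $q \geq 2$), giving $m^- = 2^{q-1} - 1$. Part (2) is essentially the same idea: from $p = 2q+1$ I get $p - 1 = 2q$, and since $d$ is even the factor of $2$ must be stripped; the leftover $q$ is prime and, by the hypothesis $q \nmid d$, is automatically coprime to $d$, so no further stripping occurs and $m^- = q$.

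For part (3) the key observation is that, under the stated hypotheses, $\gcd(p-1, d) = 1$ outright, so no stripping happens at all. Indeed, $p \equiv -1 \pmod{d}$ says $d \mid p+1$, so any prime $\ell$ dividing both $d$ and $p-1$ would also divide $p+1$, hence would divide the difference $(p+1) - (p-1) = 2$; this forces $\ell = 2$, contradicting that $d$ is odd. Consequently $m^- = p - 1$. The only potential pitfall I anticipate is being careful about the convention for ``integer part of $n$ relatively prime to $d$'' (i.e., that one removes full prime-power contributions shared with $d$, not merely a single copy of each shared prime); once that convention is fixed, each item is a one-line calculation.
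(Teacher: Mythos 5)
Your proof is correct and takes essentially the same approach as the paper: each part is a direct arithmetic computation from the definition of $m^{\pm}$ as the largest divisor of $p\pm 1$ coprime to $d$ (in part (3) the paper phrases the key step as $p-1 \equiv d-2 \pmod d$, which is exactly the observation that any common prime divisor of $d$ and $p-1$ must divide $2$, as you argue via $d\mid p+1$). Your closing caveat about the convention is also the right one to flag and you resolve it correctly.
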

    \begin{proof}
        \mbox{}
        \begin{enumerate}
            \item Let $p = 2^q-1$ be a Mersenne prime and $d= 2^k$ a power of $2$. Then $p+1 = 2^q$ so that $m^+ = 1$ and $p-1 = 2(2^{q-1}-1)$ with $d$ even, so that $m^- = 2^{q-1}-1$.

            \item For $p= 2q+1$ a Sophie Germain prime and $d$ even, we have $p-1 = 2q$ so that $m^-= q$ when $q$ is relative prime to $d$. In other words, when $q$ is not one of the prime divisors of $d$, there are finitely many such exceptions.

            \item Let $d$ be odd and $p$ a prime with $p \equiv -1 \pmod{d}$. We have
                \begin{equation*}
                    p-1 \equiv d-2 \pmod{d}
                \end{equation*}
                so that $m^- = p-1$ since $d$ is odd. \qedhere
        \end{enumerate}
    \end{proof}
    \begin{rem}
        Note that $d=2^k$ is the only case where we have $m^{+} =1$ since this is the only case where $p+1 = d^k$ because $d-1 \mid (d^k-1)$. In the case $d>2$, we get $m^{+}$ is the maximum possible. So if $\gcd(d,2^{q-1}-1) = 1$, both $T_d$ and $C_d$ are permutations.
    \end{rem}

    \begin{algorithm}
    \caption{Differentiating polynomials in dimension 1}
    \label{alg_dim1}
    \begin{algorithmic}[1]
        \Require A self-map $f$ of degree $d$
        \If {$d$ is a power of $2$}
        \State Choose a sequence of Mersenne primes $\mathcal{P} = \{p_i\}$
        \ElsIf {$d$ is even}
        \State Choose a sequence of Sophie German primes $\mathcal{P} = \{p_i\}$
        \Else
        \State Choose a sequence of primes $\mathcal{P} = \{p_i\}$ such that $p_i\equiv -1 \pmod{d}$ for each $i$
        \EndIf
        \State Determine the sequence of values $N_i = \#\Per(f,\F_{p_i})$
        \If {the sequence $\{N_i\}$ is increasing as $\sqrt{p_i}$}
        \State Return ``random''
        \Else
            \State Compare the result to Proposition \ref{prop_power} and Theorem \ref{thm_cheby_dim1_count} and return either ``Power'' or ``Chebyshev''
        \EndIf
    \end{algorithmic}
    \end{algorithm}
    Because the order of growth of the number of periodic points is approximately $\sqrt{p}$ for random maps and $p$ for power and Chebyshev maps, distinguishing random from not random is determined by the rate of growth. Distinguishing between the two non-random cases of $T_d$ and $P_d$ is done through the explicit counts. Lemma \ref{lem_dim1_prime_forms} ensures that the chosen prime will give distinct values for the number of periodic points.

\section{Cycle Statistics in Dimension $>1$} \label{sect_dimN}

    In this section, we analyze split polynomial maps in dimension greater than $1$.
    The global dynamics of this type of map has been studied previously; see for example \cite{ghioca4, Medvedev, Nguyen}. We study the cycle statistics of these maps modulo primes.

\subsection{Total number of periodic points}

    The following proposition provides that main tool for studying periodic points of split polynomial maps.
    \begin{prop} \label{prop_prod_count}
        Let $f:\A^n\to \A^n$ and $g: \A^m \to \A^m$ be split polynomial maps. Let $h = f \times g$ be the product map. Then we have
        \begin{equation*}
            \#\Per(h,\A^{n+m}(\F_p)) = \#\Per(f,\A^n(\F_p))\cdot \#\Per(g,\A^m(\F_p)).
        \end{equation*}
        Furthermore, if the coordinate functions of $f$ and $g$ all have the same degree $d$, then for $H$, the homogenization of $h$, we have
        \begin{equation*}
            \#\Per(H,\P^{n+m}(\F_p)) = \#\Per(h,\A^{n+m}(\F_p)) + \#\Per(P_d,\P^{n+m-1}(\F_p)),
        \end{equation*}
        where $P_d$ is the $d$-th powering map.
    \end{prop}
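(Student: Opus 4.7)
The plan is to prove the two equalities separately. The first is a set-theoretic identity that uses the coordinate-wise action of a split map, while the second uses the standard decomposition of projective space into an affine chart and its hyperplane at infinity.

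For the first equality, I would exploit the defining property of a split map, namely $h^k(x,y) = (f^k(x), g^k(y))$ for every $k \geq 0$. This forces $(x,y) \in \Per(h)$ if and only if $x \in \Per(f)$ and $y \in \Per(g)$: one direction is immediate from the coordinate-wise identity, and for the other, if $c(f,x) = a$ and $c(g,y) = b$ then $h^{\lcm(a,b)}(x,y) = (x,y)$. Hence $\Per(h) = \Per(f) \times \Per(g)$ as sets, from which the product count is immediate.

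For the second equality, I would decompose $\P^{n+m}(\F_p) = \A^{n+m}(\F_p) \sqcup H_\infty$, where $H_\infty \cong \P^{n+m-1}(\F_p)$ is the hyperplane at infinity $z=0$. Because all coordinate polynomials have common degree $d$, the homogenization $H$ is a morphism of $\P^{n+m}$; because each coordinate is a polynomial, $H_\infty$ is totally invariant under $H$, and the affine chart is likewise invariant. Periodicity respects this decomposition, so
$$\#\Per(H,\P^{n+m}(\F_p)) = \#\Per(H|_{\A^{n+m}}) + \#\Per(H|_{H_\infty}).$$
The affine restriction is $h$ itself, giving the first summand. To identify the infinity restriction, I would write each coordinate $f_i(x_i) = x_i^d + (\text{lower order in } x_i)$, homogenize by a new variable $z$ so that every lower-order monomial acquires a positive power of $z$, and set $z=0$; this collapses each coordinate to $x_i \mapsto x_i^d$, so that $H|_{H_\infty}$ coincides with the coordinate-wise $d$-th power map $P_d$ on $\P^{n+m-1}(\F_p)$, which accounts for the second summand.

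The only delicate point is the last identification: one has to check that the restriction to $H_\infty$ really agrees with $P_d$ projectively, which requires tracking the leading coefficients $a_i$ of each $f_i$ through homogenization and verifying that any diagonal twist is absorbed by the projective equivalence (or that the relevant $(d-1)$-st roots exist, possibly after passing to $\F_{p^r}$ and descending by Frobenius-equivariance). Once that identification is in hand, both equalities follow formally from the set-theoretic decompositions above.
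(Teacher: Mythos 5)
Your proof of the affine product formula is the same in substance as the paper's: both rest on the coordinate-wise identity $h^k(x,y)=(f^k(x),g^k(y))$, which gives $\Per(h)=\Per(f)\times\Per(g)$ as sets, and then one takes cardinalities. Your projective argument also tracks the paper's: decompose $\P^{n+m}$ into the distinguished affine chart and the invariant hyperplane at infinity, identify the restriction of $H$ to $H_\infty$ with the $d$-th power map, and add the two counts.

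You are right to flag the leading-coefficient issue as the delicate step, and in doing so you go further than the paper, which silently writes the map at infinity as $(x_0^d,\ldots,x_{n+m-1}^d)$ -- literally correct only when every $f_i$ is monic, or more generally when all leading coefficients agree up to a common projective scalar. However, your proposed repair -- extract $(d-1)$-st roots over $\F_{p^r}$ and ``descend by Frobenius-equivariance'' -- does not work as stated: a map that becomes conjugate to $P_d$ only over an extension is a \emph{twist} of $P_d$, and twists can and do change $\F_p$-rational point counts, so conjugacy over $\overline{\F_p}$ is not enough to transfer the count. The cheap and correct repair is different: the restriction of $H$ to $H_\infty$ is, in each standard affine chart of $H_\infty$, again a split polynomial map with coordinates $w_i\mapsto \ell_i w_i^d$, so the already-proved affine half of the proposition applies recursively and reduces everything to the one-variable count $\#\Per\!\left(w\mapsto \ell w^d,\A^1(\F_p)\right)$. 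That count equals $m^-+1$ independently of $\ell\in\F_p^\ast$: the periodic set of any finite self-map equals its eventual image, and the eventual image of $w\mapsto\ell w^d$ on $\F_p^\ast$ is a coset of the subgroup of $d^k$-th powers for $k$ large, hence has exactly $m^-$ elements regardless of the constant $\ell$ (and $0$ is fixed). With that observation in hand, the infinity contribution agrees with $\#\Per(P_d,\P^{n+m-1}(\F_p))$ without any hypothesis on the existence of $(d-1)$-st roots, and the rest of your argument -- and the paper's -- goes through as written.
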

    \begin{proof}
        Assume that $(x,y)$ is a periodic point for $h$ with $x \in \A^n$ and $y \in \A^m$. Then, since $h$ is a split polynomial map, $x$ is periodic for $f$ and $y$ is periodic for $g$. Let $\{x_1,x_2,\ldots,x_s\}$ be the cycle containing $x=x_1$ for $f$ and $(y_1,\ldots,y_t)$ be the cycle containing $y=y_1$ for $g$. Then each tuple $(x_i,y_j)$ for $1 \leq i \leq s$ and $1 \leq j \leq t$ determines a distinct periodic point for $h$. Hence,
        \begin{equation*}
            \Per(h,\A^{n+m}(\F_p)) = \Per(f,\A^n(\F_p))\cdot \Per(g,\A^m(\F_p)).
        \end{equation*}

        In the projective case, if the coordinates of $f$ and $g$ are polynomials of the same degree, then $H$ is a morphism. The periodic points in the affine chart with $x_{n+m} \neq 0$ are as in the previous part, so we need only consider the contribution from the points at infinity. Since the coordinate functions $f_i,g_j$ are all polynomials of the same degree, at $x_{n+m}=0$, we end up with the powering map $(x_0^d,\ldots,x_{n+m-1}^d,0)$ acting on the first $n+m$ coordinates. This is the same as the $d$-th power map on $\P^{n+m-1}$. Hence,
        \begin{equation*}
            \#\Per(H,\P^{n+m}(\F_p)) = \#\Per(h,\A^{n+m}(\F_p))) + \#\Per(P_d,\P^{n+m-1}(\F_p)).
            \qedhere
        \end{equation*}
    \end{proof}

    \begin{cor}\label{cor_power_periodic}
        Let $P_d^N$ be the powering map in dimension $N$ of degree $d$. Then,
        \begin{align*}
            \#\Per(P^N_d, \A^N(\F_p)) &= (m+1)^N\\
            \#\Per(P^N_d, \P^N(\F_p)) &= \sum_{i=0}^N (m+1)^i.
        \end{align*}
    \end{cor}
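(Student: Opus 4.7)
The plan is to derive both formulas by straightforward induction on $N$, using Proposition \ref{prop_prod_count} as the product-decomposition tool and Proposition \ref{prop_power} as the base case. In the statement $m$ should be read as $m^{-}$, the integer part of $p-1$ relatively prime to $d$, so that Proposition \ref{prop_power} gives $\#\Per(P_d,\A^{1}(\F_p))=m+1$ and $\#\Per(P_d,\P^{1}(\F_p))=m+2$.

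For the affine identity I would write $P_d^{N} = P_d \times P_d^{N-1}$ as a split polynomial map whose components all have the common degree $d$, and apply the multiplicative formula
\[
\#\Per(P_d^{N},\A^{N}(\F_p)) \;=\; \#\Per(P_d,\A^{1}(\F_p))\cdot\#\Per(P_d^{N-1},\A^{N-1}(\F_p))
\]
from Proposition \ref{prop_prod_count}. A trivial induction on $N$, with base case $\#\Per(P_d,\A^{1}(\F_p))=m+1$, then gives $\#\Per(P_d^{N},\A^{N}(\F_p))=(m+1)^{N}$.

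For the projective identity I would again proceed by induction on $N$. The base case $N=1$ is $\#\Per(P_d,\P^{1}(\F_p))=m+2=(m+1)+1$, which matches $\sum_{i=0}^{1}(m+1)^{i}$. For the inductive step, apply the second formula in Proposition \ref{prop_prod_count} to the product decomposition $P_d^{N} = P_d \times P_d^{N-1}$, noting again that all coordinate functions share degree $d$, to obtain
\[
\#\Per(P_d^{N},\P^{N}(\F_p)) \;=\; \#\Per(P_d^{N},\A^{N}(\F_p)) \;+\; \#\Per(P_d,\P^{N-1}(\F_p)).
\]
Substituting the affine formula just proved for the first summand and the inductive hypothesis $\sum_{i=0}^{N-1}(m+1)^{i}$ for the second summand yields $(m+1)^{N}+\sum_{i=0}^{N-1}(m+1)^{i}=\sum_{i=0}^{N}(m+1)^{i}$, as required.

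There is no real obstacle here; the only point to verify carefully is that at each inductive step the hypothesis of Proposition \ref{prop_prod_count} is satisfied, namely that the split map $P_d \times P_d^{N-1}$ has all coordinate functions of the same degree $d$, which is immediate for a pure powering map. The rest is bookkeeping with the geometric series.
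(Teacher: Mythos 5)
Your proposal is correct and matches the paper's intended route: Corollary \ref{cor_power_periodic} is stated immediately after Proposition \ref{prop_prod_count} precisely so that it follows by iterating that proposition with the base case from Proposition \ref{prop_power}, and your induction (including reading $m$ as $m^{-}$) carries this out cleanly. The paper also re-derives the same totals later in Theorem \ref{thm_powering-dimN} via explicit cycle-counting formulas, but your argument is the one the authors clearly have in mind for this corollary.
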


    \begin{thm} \label{thm_dimN_periodic}
        Define the following maps for $\A^1 \to \A^1$: $P_d$, the $d$th powering map and $T_d$, the $d$th Chebyshev polynomial of the first kind. Also define $R_k$ a random mapping $\A^k \to \A^k$.

        Consider the product map
        \begin{equation*}
             \phi= P_d^{\times a} \times T_d^{\times b} \times \prod_{i} R_{k_i}^{\times c_i}
        \end{equation*}
        where $f^{\times n} = f \times \cdots \times f$ $n$ times. Let $K = \sum k_i\cdot c_j$ and $N = a + b + K$.

        Let $p$ be a prime, $m^{-}$ be the integer part of $p-1$ relatively prime to $d$, and $m^{+}$ be the integer part of $p+1$ relatively prime to $d$.
        Then we have
        \begin{equation*}
            \#\Per(\phi,\A^N) =
            \begin{cases}
                (m^-+1)^a \cdot \left(\frac{m^-+m^+}{2}\right)^b & K =0\\
                O\left((m^-+1)^a \cdot \left(\frac{m^-+m^+}{2}\right)^b p^{K/2}\right) & K \neq 0.
            \end{cases}
        \end{equation*}
        where $O(\cdot)$ represents big-O notation.
        Let $\Phi$ be the homogenization of $\phi$. Then we have
        \begin{equation*}
            \#\Per(\phi,\P^N) =
            \begin{cases}
                (m^-+1)^a \cdot \left(\frac{m^-+m^+}{2}\right)^b + \sum_{i=0}^{N-1} (m^-+1)^i & K=0\\
                O\left((m^-+1)^a \cdot \left(\frac{m^-+m^+}{2}\right)^b p^{K/2}\right) & K \neq 0.
            \end{cases}
        \end{equation*}
    \end{thm}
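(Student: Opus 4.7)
The plan is to use Proposition \ref{prop_prod_count} iteratively, reducing the count of periodic points of the product map $\phi$ to a product of the counts for the one-dimensional factors $P_d$ and $T_d$ together with the random factors $R_{k_i}$. By the first part of Proposition \ref{prop_prod_count}, a repeated splitting yields
\begin{equation*}
    \#\Per(\phi,\A^N(\F_p)) = \#\Per(P_d,\A^1)^a \cdot \#\Per(T_d,\A^1)^b \cdot \prod_i \#\Per(R_{k_i},\A^{k_i})^{c_i}.
\end{equation*}
Since the proposition is stated for two-factor products, the reduction proceeds by induction on the number of factors.

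For the affine formula I substitute the known counts: Proposition \ref{prop_power} gives $\#\Per(P_d,\A^1(\F_p)) = m^-+1$ and Theorem \ref{thm_cheby_dim1_count} gives $\#\Per(T_d,\A^1(\F_p)) = (m^-+m^+)/2$. When $K=0$ there are no random factors and the exact identity falls out directly. When $K \neq 0$, Theorem \ref{thm_rnd_dim1} provides the asymptotic $\#\Per(R_k,\A^k(\F_p)) \sim \sqrt{\pi p^k/8} = O(p^{k/2})$; multiplying these across the random factors gives $O(p^{\sum k_i c_i/2}) = O(p^{K/2})$, and combining with the exact counts from the $P_d$ and $T_d$ factors yields the stated big-O bound.

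For the projective case I invoke the second part of Proposition \ref{prop_prod_count}: since the coordinate maps of $\phi$ all have common degree $d$, the homogenization $\Phi$ is a morphism, and the points of $\P^N(\F_p)$ decompose into the standard affine chart together with the hyperplane at infinity, on which $\Phi$ acts as the $d$-th powering map on $\P^{N-1}$. This gives
\begin{equation*}
    \#\Per(\Phi,\P^N(\F_p)) = \#\Per(\phi,\A^N(\F_p)) + \#\Per(P_d,\P^{N-1}(\F_p)),
\end{equation*}
and Corollary \ref{cor_power_periodic} supplies $\#\Per(P_d,\P^{N-1}(\F_p)) = \sum_{i=0}^{N-1}(m^-+1)^i$. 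For $K=0$ this is exactly the claimed formula, and for $K\neq 0$ the infinity contribution is absorbed into the $O$-expression.

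The main obstacle is, I think, the bookkeeping surrounding the big-O bound: one has to verify that the polynomial-in-$p$ infinity contribution $\sum_{i=0}^{N-1}(m^-+1)^i$ is genuinely dominated by the affine growth factor $(m^-+1)^a \cdot ((m^-+m^+)/2)^b \cdot p^{K/2}$ in the ranges of $(a,b,K)$ for which the theorem is applied, and that the implied constants in the random-map asymptotic combine cleanly across distinct factors. The rest of the argument is a direct substitution of earlier results into the multiplicative identity supplied by Proposition \ref{prop_prod_count}.
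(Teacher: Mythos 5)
Your approach is the same as the paper's: split the affine count multiplicatively via iterated application of the first part of Proposition~\ref{prop_prod_count}, substitute the one-dimensional counts from Proposition~\ref{prop_power}, Theorem~\ref{thm_cheby_dim1_count}, and the asymptotic from Theorem~\ref{thm_rnd_dim1}, then handle the projective case by adding the $P_d$ contribution at infinity via the second part of Proposition~\ref{prop_prod_count} and Corollary~\ref{cor_power_periodic}. The paper's proof is a two-sentence version of exactly this.

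The worry you flag at the end is, however, the interesting part, and it is a genuine one. The combination of the random-map constants across distinct factors is unproblematic: each factor contributes $\Theta(p^{k_i/2})$ with an explicit constant, and products of such asymptotics multiply cleanly to give $\Theta(p^{K/2})$. What does not work in general is the absorption of the infinity term. When $K\neq 0$ the hyperplane-at-infinity contribution is $\sum_{i=0}^{N-1}(m^-+1)^i \sim (m^-+1)^{N-1} = (m^-+1)^{a+b+K-1}$, and the claimed bound has shape $(m^-+1)^a\bigl(\tfrac{m^-+m^+}{2}\bigr)^b p^{K/2}$. Using $m^++1\le (m^-+m^+)$ only gives a ratio of at most $2^b(m^-+1)^{K-1}/p^{K/2} \le 2^b\,p^{K/2-1}$, which is bounded only for $K\le 2$. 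For $K\ge 3$ and primes with $m^-$ close to $p$ (which occur for a positive density of primes whenever $d$ can be chosen coprime to $p-1$), the infinity contribution grows like $p^{N-1}$ while the stated bound grows like $p^{a+b+K/2}$, and since $K/2 < K-1$ the infinity term genuinely dominates. So the projective $K\neq 0$ formula is not correct as written unless one restricts to primes with $m^-$ bounded (which is how the theorem is used in Algorithm~\ref{alg_dimN}, where the prime sequence is chosen to keep $m^-$ constant). A safe fix is to add the infinity term $\sum_{i=0}^{N-1}(m^-+1)^i$ explicitly inside the $O(\cdot)$ for the projective $K\neq 0$ case, or to state the bound only over prime sequences with $m^-$ bounded. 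Your instinct to ask for the verification was correct; the paper's proof skips this step.
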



    \begin{proof}
        The $K=0$ formula comes from Proposition \ref{prop_prod_count} combined with the counts from Section \ref{sect_dim1}.

        For $K \neq 0$, we use Proposition \ref{prop_prod_count} with Section \ref{sect_dim1} for the group derived portion and Theorem \ref{thm_rnd_dim1} for the big-$O$ of the random portion.
    \end{proof}
    \begin{exmp}
        Consider the map
        \begin{align*}
            \phi = P_2 \times (T_2)^{\times 2}:\A^3 &\to \A^3\\
            (x,y,z) &\mapsto (x^2, y^2-2, z^2-2).
        \end{align*}
        Then for $p=37$, we compute $m^- = 9$ and $m^+ = 19$ so that
        \begin{align*}
            \#\Per(\phi,\A^3(\F_p)) &= 10 \cdot \left(\frac{9+19}{2}\right)^2 = 1960\\
            \#\Per(\phi,\P^3(\F_p)) &= 1960 + \sum_{i=0}^{2}(10)^i = 2071.
        \end{align*}
\begin{code}
\begin{python}
P.<x,y,z,w>=ProjectiveSpace(GF(37),3)
H=End(P)
Td=H([x^2,y^2-2*w^2,z^2-2*w^2,w^2])
G=Td.cyclegraph()
sum([len(t)-1 for t in G.all_simple_cycles()])
\begin{python}
\end{code}
    \end{exmp}

\subsection{Tail lengths}
    The following lemma provides the key tool to studying tails for split polynomial maps. Define the \emph{$k$th preimages} of a point $x$ under the map $f$ as
    \begin{equation*}
        f^{-k}(x) = \{z : f^k(z) = x\}.
    \end{equation*}
    \begin{lem}
         Let $f:\A^n\to \A^n$ and $g: \A^m \to \A^m$ be split polynomial maps. Let $h = f \times g$ be the product map. Let $(x,y) \in \A^{n+m}$. For any positive integer $k$, we have the equality
        \begin{equation*}
            h^{-k}(x,y) = \{(u,v) : f^k(u) = x, g^k(v) = y\}.
        \end{equation*}
    \end{lem}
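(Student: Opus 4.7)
The plan is to reduce the statement to the elementary fact that iteration commutes with the product structure of a split map, and then unravel the definition of the preimage set. First I would establish by a short induction on $k$ that $h^k = f^k \times g^k$, i.e., $h^k(u,v) = (f^k(u), g^k(v))$ for all $(u,v) \in \A^{n+m}$. The base case $k=1$ is the definition of $h = f \times g$, and the inductive step just uses the fact that applying $h$ to a pair $(f^k(u), g^k(v))$ produces $(f^{k+1}(u), g^{k+1}(v))$ by definition of the product map.

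With this in hand, the main equality is a direct chain of set-equalities: by definition, $h^{-k}(x,y) = \{(u,v) : h^k(u,v) = (x,y)\}$; by the iteration identity, this equals $\{(u,v) : (f^k(u), g^k(v)) = (x,y)\}$; and finally, two tuples in $\A^n \times \A^m$ are equal iff their components are equal, giving $\{(u,v) : f^k(u) = x,\ g^k(v) = y\}$, which is the claimed description. No cardinality or structural argument is needed — only the observation that the coordinates of a split map evolve independently under iteration.

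There isn't really a main obstacle here; the only point deserving attention is to be explicit that \emph{split} is what makes the iteration split as well, which is why the proof is essentially formal. I would keep the write-up to a few lines, emphasizing the inductive identity $h^k = f^k \times g^k$ and the immediate unpacking of preimage sets.
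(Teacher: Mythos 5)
Your argument is correct and matches the paper's approach; the paper simply compresses the whole thing into the remark that split maps iterate coordinatewise, which is exactly your inductive identity $h^k = f^k \times g^k$ followed by unpacking the preimage. Nothing further is needed.
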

    \begin{proof}
        Since the maps involved are split polynomial maps, each coordinate is independent.
    \end{proof}
    \begin{cor}
        Let $f:\A^n\to \A^n$ and $g: \A^m \to \A^m$ be split polynomial maps. Let $h = f \times g$ be the product map. Then for any point $(x,y) \in \A^{n+m}$ we have
        \begin{align*}
            \#h^{-k}(x,y) = (\#f^{-k}(x))(\#g^{-k}(y)).
        \end{align*}
        Note that these sets include periodic points.
    \end{cor}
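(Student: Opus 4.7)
The plan is to deduce this corollary immediately from the preceding lemma together with the basic counting principle for Cartesian products. The lemma identifies the preimage set $h^{-k}(x,y)$ as $\{(u,v) : f^k(u)=x,\ g^k(v)=y\}$, and the defining condition factors across the two coordinates: the condition on $u$ involves only $f$ and $x$, while the condition on $v$ involves only $g$ and $y$. So my first move is to rewrite this set as the honest Cartesian product
\begin{equation*}
    h^{-k}(x,y) = f^{-k}(x) \times g^{-k}(y).
\end{equation*}

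Once this identification is made, the cardinality equation $\#(A\times B) = \#A \cdot \#B$ gives the claim in one line. I would write this step out explicitly just to make the logic transparent: an element of the left-hand side is a pair $(u,v)$ satisfying $f^k(u)=x$ and $g^k(v)=y$, and any such pair is exactly a choice of $u \in f^{-k}(x)$ together with an independent choice of $v \in g^{-k}(y)$, so the pairs are in bijection with the product set.

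There is no genuine obstacle here; the content of the corollary is entirely in the lemma's coordinatewise description of preimages, and the corollary merely packages that description as a multiplicative count. The only thing worth flagging in the write-up is the parenthetical remark in the statement that these preimage sets are allowed to contain periodic points, since for tail-length purposes one later needs to carve out the preperiodic-but-not-periodic contribution; but that subtlety does not affect the counting identity itself and can be handled where it arises.
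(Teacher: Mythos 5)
Your proof is correct and matches the paper's (implicit) reasoning: the corollary is a direct consequence of the preceding lemma, which identifies $h^{-k}(x,y)$ as the Cartesian product $f^{-k}(x)\times g^{-k}(y)$, after which the count is just $\#(A\times B)=\#A\cdot\#B$. Nothing further is needed.
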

    \begin{cor}
         Let $f:\A^n\to \A^n$ and $g: \A^m \to \A^m$ be two split polynomial maps. Let $h = f \times g$ be the product map. Then
         \begin{equation*}
            t(h,(x,y)) = \max(t(f,x),t(g,y)).
         \end{equation*}
    \end{cor}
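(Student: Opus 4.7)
The plan is to mirror the structure of the proof of Lemma \ref{lem_cheby_2}, exploiting the fact that in a split polynomial map the two coordinate blocks evolve completely independently. Set $w = \max(t(f,x), t(g,y))$; the goal is to prove $t(h,(x,y)) \leq w$ and $t(h,(x,y)) \geq w$ separately.

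For the upper bound, I would note that by definition of $w$ both $f^{w}(x)$ and $g^{w}(y)$ are periodic, say with minimal periods $s$ and $t$ respectively. Since $h^{k}(x,y) = (f^{k}(x), g^{k}(y))$ for every $k \geq 0$, applying $h^{\lcm(s,t)}$ to $h^{w}(x,y)$ returns $(f^{w}(x), g^{w}(y)) = h^{w}(x,y)$. Thus $h^{w}(x,y)$ is periodic for $h$, giving $t(h,(x,y)) \leq w$.

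For the lower bound, I would argue contrapositively: suppose $w' < w$. Without loss of generality $w = t(f,x)$, so $f^{w'}(x)$ is not periodic for $f$, meaning no $j \geq 1$ satisfies $f^{j}(f^{w'}(x)) = f^{w'}(x)$. Then for any $j \geq 1$ we have $h^{j}(h^{w'}(x,y)) = (f^{j+w'}(x), g^{j+w'}(y))$, whose first coordinate fails to equal $f^{w'}(x)$. Hence $h^{w'}(x,y)$ is not periodic, which forces $t(h,(x,y)) > w'$. Taking $w' = w-1$ gives $t(h,(x,y)) \geq w$.

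There is no real obstacle here: the argument is essentially a bookkeeping exercise, and the only substantive ingredient is the splitness observation $h^{k}(x,y) = (f^{k}(x), g^{k}(y))$, which has already been recorded in the preceding lemma and corollary about preimages. The structure of the argument is identical to the tail computation for composed Chebyshev polynomials in Lemma \ref{lem_cheby_2}, so one could alternatively cite that proof verbatim after swapping $T_{a}\circ T_{b}$ for $f \times g$.
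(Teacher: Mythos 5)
Your argument is correct, and the paper gives no explicit proof of this corollary—it is presented as an immediate consequence of the splitness identity $h^{k}(x,y) = (f^{k}(x), g^{k}(y))$ established in the preceding lemma. Your two-sided bound is precisely the routine verification that the paper elides, and the analogy you draw to Lemma \ref{lem_cheby_2} is apt: the same ``both coordinates must be periodic, and the last one to stabilize dictates the tail'' reasoning applies verbatim.
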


    \begin{prop} \label{prop_cheby_tail}
        Define the following maps $\A^1 \to \A^1$, $P_d$ the $d$th powering map, $T_d$, the $d$th Chebyshev polynomial of the first kind. Also define $R_k$ a random mapping $\A^k \to \A^k$.

        Consider the product map
        \begin{equation*}
             \phi= P_d^{\times a} \times T_d^{\times b} \times \prod_{i} R_{k_i}^{\times c_i}
        \end{equation*}
        where $f^{\times n} = f \times \cdots \times f$ $n$ times. Let $K = \sum k_i\cdot c_j$ and $N = a + b + K$.

        Then we have, where $q$ ranges over prime divisors of $d$,
        \begin{equation*}
            \max_x(t(\phi,x)) =
            \begin{cases}
               \max_q\left(\left\lceil \frac{v_q(p-1))}{v_q(d)}\right\rceil \right)  & K =0, b=0\\
               \max_q\left(\max\left(
               \left\lceil \frac{v_q(p-1))}{v_q(d)}\right\rceil\right), \max\left(
               \left\lceil \frac{v_q(p+1))}{v_q(d)}\right\rceil\right)
               \right)  & K =0, b\neq 0\\
                O(p^{\max(k_i)/2}) & K \neq 0.
            \end{cases}
        \end{equation*}
    \end{prop}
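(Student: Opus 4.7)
The plan is to reduce everything to the corollary immediately preceding the proposition, which says that for a product of split polynomial maps the tail of a point is the maximum of the coordinate tails. Applying this corollary inductively to the factorization $\phi = P_d^{\times a} \times T_d^{\times b} \times \prod_i R_{k_i}^{\times c_i}$, we obtain
\begin{equation*}
    \max_{x \in \A^N(\F_p)} t(\phi,x) = \max\!\left(
    \max_{x \in \A^1(\F_p)} t(P_d,x),\;
    \max_{x \in \A^1(\F_p)} t(T_d,x),\;
    \max_i \max_{x \in \A^{k_i}(\F_p)} t(R_{k_i},x)
    \right),
\end{equation*}
where the first term appears only if $a \geq 1$, the second only if $b \geq 1$, and the third only when the corresponding $c_i \geq 1$. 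Hence the proof reduces to plugging in the known tail formulas for each factor type.

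The individual ingredients are already in hand: Proposition \ref{prop_power} provides $\max_x t(P_d,x) = \max_q \lceil v_q(p-1)/v_q(d)\rceil$ with $q$ ranging over primes dividing $d$, and the proposition following Lemma \ref{lem_cheby_2} provides the corresponding formula for $T_d$ with the extra $v_q(p+1)$ term. Substituting these gives the first two cases of the statement verbatim. When $b = 0$, the $T_d$ contribution is absent, and we are left with the power map expression alone; when $b \neq 0$ (and $K = 0$), the Chebyshev expression dominates because it is already a maximum including the $v_q(p-1)$ quantities.

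For the third case, $K \neq 0$, one of the random factors $R_{k_i}$ is present. The random factor acts on $\A^{k_i}(\F_p)$, a set of size $p^{k_i}$. By Theorem \ref{thm_rnd_dim1}, the expected maximum tail length of a random map on a set of size $s$ satisfies $\max_x t = \log(2)\sqrt{2\pi s}$, so for $R_{k_i}$ we get $O(p^{k_i/2})$. Taking the maximum over $i$ yields $O(p^{\max(k_i)/2})$. Since this term grows like a positive power of $p$ while both the power map and Chebyshev contributions grow only logarithmically in $p$ (being at most $\log_q(p\pm 1)$ for primes $q \mid d$), the random contribution dominates asymptotically, giving the stated big-$O$ bound.

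The only subtle point is the distinction between the deterministic equalities in the first two cases and the asymptotic bound in the third: one should note that Theorem \ref{thm_rnd_dim1} is an asymptotic statement about random mappings rather than a pointwise equality, which is why the third case is written with $O(\cdot)$ rather than an exact formula. This is also the reason one cannot extract a clean deterministic expression when $K \neq 0$, and it is essentially the main (though mild) obstacle in the argument.
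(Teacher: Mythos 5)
Your proof is correct and takes essentially the same route as the paper: reduce via the corollary that $t(h,(x,y)) = \max(t(f,x),t(g,y))$ for split products, then substitute the known tail formulas for $P_d$, $T_d$, and the random factors. The paper's one-line proof (which even contains a self-referential citation typo where the Chebyshev tail proposition is meant) is simply a compressed version of exactly this argument, and you correctly flag the only subtlety, namely that the random-map bound is asymptotic rather than an exact equality.
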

    \begin{proof}
    We combine Theorem \ref{thm_rnd_dim1}, Proposition \ref{prop_power}, and Proposition \ref{prop_cheby_tail}.
    \end{proof}
    \begin{rem}
        The proposition provides the key to distinguishing the product of two random maps from a truly random map. For example, the product of two random dimension 1 maps will have largest tail on the order of $\sqrt{p}$, whereas a random map in dimension 2 will have largest tail on the order of $p$.
    \end{rem}

\section{Differentiating maps in dimension greater than 1} \label{sect_dimN_alg}

    We use Lemma \ref{lem_dim1_prime_forms}, Theorem \ref{thm_dimN_periodic}, and Proposition \ref{prop_cheby_tail} to differentiate between split polynomial maps through cycle statistics. Note that if we are given a split polynomial map in split form (i.e., each of the defining polynomials is a single variable polynomial), then we could simply apply Algorithm \ref{alg_dim1} to each coordinate separately. However, conjugation will not change the dynamics, but will result in a ``mixing'' of the coordinates. Additionally, if there is a component that is random on dimension $k$ (as opposed to the product of $k$ random dimension 1 maps), then it will be not split into $k$ separated polynomials.

    The idea of the general algorithm is as follows. We can determine the total amount of randomness by looking at the growth of the number of periodic points. Using growth of the tails, we can determine the highest dimension of the random components but not the dimensions of each random component. If there is no random component, then exact counts of periodic points determine the map. If there is a random component, the choice of prime sequence will cause the number of periodic points of the different types of maps to diverge. The choice of this sequence of primes is somewhat delicate. The ideal sequence is a sequence where $m^{-}$ is a constant over all values of the sequence, such as the Mersenne primes for $d = 2^k$. The clear drawback of using Mersenne primes is that the explicit count of periodic points grows exponentially in the prime (with exponent depending on the dimension). Consequently, large primes are not feasible. Fortunately, in practice it is sufficient to find a short sequence of primes that are not too large for which $m^{-}$ is constant. For example, the sequence $\{7, 13, 87, 173, 793\}$ has $m^- = 3$ when $d=4$.

    \begin{algorithm}
    \caption{Differentiating polynomials in dimension greater than 1}
    \label{alg_dimN}
    \begin{algorithmic}[1]
        \Require A self-map $f$ of degree $d$ on dimension $N$
        \State Choose a sequence of primes $\{p_i\}$ for which $m^-$ is constant.
        \State Determine the sequence of values $N_i = \#\Per(f,\F_{p_i})$.
        \State Graph the $N_i$ versus $p$ on the same graph as a representative curve for each of the possible split polynomial combinations.
        \State Determine the type that most closely matches the graphs.
        \State To determine the size of the random component, plot the largest tail length. Correlate to the size of the largest random component.
        \State Return the type that most closely matches the graphs.
    \end{algorithmic}
    \end{algorithm}

\section{The Powering Map on $\A^N$} \label{sect_powering}
    While it is possible to treat the powering map of degree $d$ on $\A^N$ as a split polynomial map and apply the results of Section \ref{sect_dimN}, it is also possible to generalize the analysis from dimension 1 as done in the literature \cite{Chou, Lucheta, Sha, Vasiga, Wilson}. We generalize Proposition \ref{prop_power} for any dimension. In the case where $d$ is prime, we also describe the precise tree structure of $G_{P_d}$ and count the number of preperiodic points with given tail length.

    Let $\ord_p$ be the multiplicative order modulo $p$ and $v_p(x)$ the valuation with respect to $p$.
\begin{code}
\begin{python}
p=13
P.<x,y>=ProjectiveSpace(GF(p),1)
H=End(P)
d=2
pd = d.prime_divisors()
m=(p-1)/prod([(q**((p-1).ord(q))) for q in pd])
f=H([x^d,y^d])
for Q in P:
    if Q[0]!=0:
        Q.orbit_structure(f), max([val(ord(Q[0],p),q)/val(d,q) for q in pd]), ord(d,gcd(ord(Q[0],p),m))
\end{python}
\end{code}
    \begin{lem}
        Let $p$ be a prime and $d>1$ an integer. Let $P_d:\A^N \to \A^N$ be the degree $d$ powering map. Let $m^-$ be the integer part of $p-1$ relatively prime to $d$.
        \begin{align*}
            t(F,z) &= \max_{i, z_i \neq 0}\left(\max_q\left(\left\lceil \frac{v_q(\ord_p(z_i))}{v_q(d)}\right\rceil\right)\right) \quad \text{ $q$ a prime factor of $d$, } 1 \leq i \leq N\\
            c(F,z) &= \lcm(\ord_{\gcd(\ord_p{z_i},m^{-})},d)), \quad 1 \leq i \leq N, z_i \neq 0.
        \end{align*}
    \end{lem}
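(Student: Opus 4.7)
The plan is to reduce the $N$-dimensional claim to a per-coordinate statement on $\A^1$ using the split-product structure from Section \ref{sect_dimN}. The degree $d$ powering map on $\A^N$ is the $N$-fold split product $P_d^{\times N}$, so the corollary on tails of products from the previous subsection yields $t(F, z) = \max_i t(P_d, z_i)$, and the same coordinate-wise independence argument (a product orbit closes up precisely when each component does) gives $c(F, z) = \lcm_i c(P_d, z_i)$. Coordinates with $z_i = 0$ are fixed by $P_d$ and contribute nothing to either formula, so we may restrict to $z_i \neq 0$. It thus suffices to prove in dimension 1 that for $z \in \F_p^{\ast}$ with $n := \ord_p(z)$,
\begin{equation*}
  t(P_d, z) = \max_{q \mid d} \left\lceil \frac{v_q(n)}{v_q(d)} \right\rceil
  \quad \text{and} \quad
  c(P_d, z) = \ord_{\gcd(n, m^-)}(d).
\end{equation*}

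For the dimension 1 computation I would work inside the cyclic group $\F_p^{\ast} \cong \Z/(p-1)\Z$. Since $P_d^k(z) = z^{d^k}$ has order $n/\gcd(n, d^k)$, factor $n = n_d \cdot n'$ where every prime divisor of $n_d$ divides $d$ and $n'$ is coprime to $d$; a short divisibility check using $n \mid p-1$ shows $n' = \gcd(n, m^-)$. The sequence $\gcd(n, d^k)$ is nondecreasing in $k$ and stabilizes at $n_d$ exactly once $k\, v_q(d) \geq v_q(n)$ for every prime $q \mid d$, that is, at $k = \max_{q \mid d} \lceil v_q(n)/v_q(d) \rceil$. Strictly before this threshold the orbit is still shrinking in order, so $z$ is not yet periodic, which gives the tail formula.

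Once $k \geq t(P_d, z)$ the iterate $P_d^k(z)$ has order exactly $n'$ and lies in the unique subgroup of that order; since $\gcd(d, n') = 1$ the restriction of $P_d$ to this subgroup is a bijection, and via the discrete logarithm it corresponds to multiplication by $d$ on $\Z/n'\Z$. The minimal period at any generator of this subgroup is then $\ord_{n'}(d)$, producing the claimed $c(P_d, z) = \ord_{\gcd(n, m^-)}(d)$. The only subtlety, rather than a real obstacle, is verifying that $\gcd(n, d^k)$ does not stabilize earlier than the claimed threshold due to some coincidence among the different primes $q \mid d$; this is immediate from comparing $q$-adic valuations one prime at a time, since stabilization of $\gcd(n, d^k)$ is equivalent to simultaneous stabilization in each $q$-component.
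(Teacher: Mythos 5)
Your reduction from $\A^N$ to $\A^1$ is exactly what the paper does: the split structure means the tail is the coordinate-wise max and the period is the coordinate-wise $\lcm$, and zero coordinates can be dropped. Where you diverge is in the dimension~1 step. The paper simply cites Sha \cite[Prop.~3.1]{Sha} for the two displayed formulas on $\F_p^\ast$, whereas you prove them from scratch by passing to the discrete log in $\Z/(p-1)\Z$, factoring $n=\ord_p(z)$ as $n_d\cdot n'$ (the $d$-part and $d$-coprime part), tracking $\gcd(n,d^k)$ to locate the tail threshold, and identifying the terminal orbit with multiplication by $d$ on $\Z/n'\Z$. Your argument is correct: a point is periodic for $P_d$ iff its order is prime to $d$ (since $\gcd(d,d^m-1)=1$ forces $n\mid d^m-1$ to fail whenever $\gcd(n,d)>1$), the identification $n'=\gcd(n,m^-)$ follows from $n\mid p-1$ and $m^-$ being the $d$-coprime part of $p-1$, and the stabilization of $\gcd(n,d^k)$ at $k=\max_q\lceil v_q(n)/v_q(d)\rceil$ is read off prime by prime. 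What your route buys is self-containment at the cost of a few lines; the paper's route is shorter but defers the core computation to Sha. Both are valid, and the $N$-dimensional content of the lemma is the same in either case.
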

    \begin{proof}
        In dimension $1$ from Sha \cite[Prop 3.1]{Sha}, we have
        \begin{align*}
            t(f,z) &= \max_q\left(\left\lceil \frac{v_q(\ord_p(z))}{v_q(d)} \right\rceil\right) \text{ $q$ a prime factor of $d$}\\
            c(f,z) &= \ord_{\gcd(\ord_p{z},m^{-})},d).
        \end{align*}
        To conclude the lemma, we note that since the map is split, the minimal period of a point is the least common multiple of the minimal periods of the coordinates under the single variable powering map of degree $d$. Similarly, the tail of $(z_1,\ldots,z_N)$ is the max of the tails of the coordinates individually.
    \end{proof}
    We prove the following counts of periodic points.
    \begin{thm}\label{thm_powering-dimN}
        Let $p$ be prime and $d>1$ an integer. Let $P^N_d$ be the degree $d$ powering map. Let $m^-$ be the integer part of $p-1$ relatively prime to $d$.
        Fix a positive integer $k$; then
        \begin{align*}
            \#\Per_k(P_d^N, \P^N)&=\sum_{D=0}^N \left( \delta_k + \sum_{I=1}^D \mathop{\sum_{k_i \mid m^{-}}}_{\lcm(\ord_d(k_i)) = k} \binom{D}{I}\prod_{i=1}^I \varphi(k_i)\right)\\
            \#\Per_k(P_d^N, \A^N)&=\delta_k + \sum_{I=1}^N \mathop{\sum_{k_i \mid m^{-}}}_{\lcm(\ord_d(k_i)) = k} \binom{N}{I}\prod_{i=1}^I \varphi(k_i)
        \end{align*}
        where $\delta_k = 1$ if $k=1$ and $0$ otherwise.
        Furthermore,
        \begin{align*}
            \#\Per(P_d^N, \P^N) &= \sum_{D=0}^N  1+\sum_{I=1}^D\left(\mathop{\sum_{k_i \mid m^-}}_{1 \leq i \leq I} \binom{D}{I} \left( \prod_{i=1}^I \varphi(k_i) \right)\right)= \sum_{D=0}^N(m^{-}+1)^D\\
            \#\Per(P_d^N, \A^N) &=  1+\sum_{I=1}^N\left(\mathop{\sum_{k_i \mid m^-}}_{1 \leq i \leq I} \binom{N}{I} \left( \prod_{i=1}^I \varphi(k_i) \right)\right)= (m^{-}+1)^N.
        \end{align*}
    \end{thm}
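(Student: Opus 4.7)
The plan is to use the preceding lemma together with the product (split) structure of $P_d^N$ to reduce the problem to counting elements of each multiplicative order dividing $m^-$, then stratify periodic tuples by their coordinate support and aggregate. By Proposition~\ref{prop_power} and the preceding lemma, a point $z \in \F_p$ is periodic for the degree-$d$ powering map iff $z = 0$ or $\ord_p(z) \mid m^-$; in the latter case the orbit length of $z$ equals $\ord_{\ord_p(z)}(d)$, while $z=0$ is fixed. Because $P_d^N$ is split, a tuple $(z_1,\ldots,z_N) \in \A^N(\F_p)$ is periodic iff each coordinate is, and its minimal period equals the least common multiple of the individual coordinate orbit lengths (a zero coordinate contributing $1$ to the lcm).

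To obtain $\#\Per_k(P_d^N,\A^N)$, I would stratify the periodic points by their support $I = \{i : z_i \neq 0\} \subseteq \{1,\ldots,N\}$. The empty support contributes only the origin, which is fixed and accounts for the $\delta_k$ term. For $|I|$ ranging from $1$ to $N$ there are $\binom{N}{|I|}$ choices of support; within such a support, each coordinate $i \in I$ is independently assigned a multiplicative order $k_i \mid m^-$ together with one of the $\varphi(k_i)$ elements of that order. Enforcing the joint condition $\lcm_{i \in I}\ord_{k_i}(d) = k$ reproduces the claimed stratified affine formula. Summing over all $k$ amounts to dropping the lcm constraint, which decouples the factors over $i \in I$ and gives
\begin{equation*}
    1 + \sum_{I=1}^{N} \binom{N}{I}\left(\sum_{k \mid m^-}\varphi(k)\right)^{\!I} = 1 + \sum_{I=1}^{N} \binom{N}{I} (m^-)^I = (m^- + 1)^N,
\end{equation*}
using the classical identity $\sum_{k \mid n}\varphi(k) = n$ and the binomial theorem.

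To pass to projective space I would use the standard cell decomposition $\P^N = \bigsqcup_{D=0}^{N} U_D$, where $U_D$ consists of those points whose last nonzero homogeneous coordinate lies in position $D+1$; normalizing that coordinate to $1$ identifies $U_D$ with $\A^D$. Since $x \mapsto x^d$ vanishes only at $0$, the powering map preserves each $U_D$, and its restriction to $U_D$ is exactly the degree-$d$ powering map on $\A^D$. Summing the affine counts (both the stratified-by-$k$ version and the total) over $D = 0,\ldots,N$ then produces both projective formulas at once. The main obstacle is purely combinatorial bookkeeping: one must count the origin of each affine stratum exactly once (handled by $\delta_k$) and verify that the lcm-of-orbit-lengths description of the minimal period matches the inner sum in the stratified formula. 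No ingredient deeper than $\sum_{k \mid n}\varphi(k) = n$, the binomial theorem, and the split structure of $P_d^N$ is required.
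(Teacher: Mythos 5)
Your proposal is correct and follows essentially the same route as the paper: both stratify periodic points by the support set of nonzero coordinates, use the fact that an element of multiplicative order $k_i \mid m^-$ has orbit length equal to the order of $d$ modulo $k_i$ (so there are $\varphi(k_i)$ such elements per choice of $k_i$), take an lcm over coordinates for the minimal period, sum over supports with the binomial coefficient, collapse via $\sum_{k \mid n}\varphi(k)=n$ and the binomial theorem, and handle $\P^N$ by the invariant affine cell decomposition $\P^N = \bigsqcup_{D=0}^{N} U_D \cong \bigsqcup_D \A^D$. The only cosmetic difference is that the paper reaches the per-coordinate count by citing Chou's cycle enumeration ($\varphi(k)/\ord_k(d)$ cycles of length $\ord_k(d)$) rather than counting elements of each order directly, but these are the same fact.
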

    \begin{proof}
        For each positive divisor $k$ of $m^-$, the powering map on $\A^1$ contains $\frac{\varphi(k)}{\ord_k(d)}$ cycles of length $\ord_k(d)$ \cite[Theorem 1]{Chou} for $k\neq 1$ and one additional cycle ($z=0$) when $k=1$. For counting the number of periodic points, we need to sum over the divisors $k_i$ of $m^-$ for each coordinate. The period is the least common multiple of the periods of the coordinates. For each choice of $k_i$ we have $\frac{\varphi(k_i)}{\ord_d(k_i)}$ cycles of length $\ord_d(k_i)$ in that coordinate. That makes $\gcd(\ord_d(k_i))$ total cycles of length $\lcm(\ord_d(k_i))$. To count those of period exactly $k$, we sum over those combinations of $k_i$ with $\lcm(\ord_d(k_i))=k$. These counting formula are only good for nonzero coordinates, so we sum over the number of nonzero coordinates as well (and the permutations based on the number of zeros).

        For minimal period exactly $k$, there is little simplification to be done when counting this way. However, for all periodic points, we arrive at
        \begin{align*}
            \#\Per(P_d^N) &= \sum_{D=0}^N 1+ \sum_{I=1}^D \left(\mathop{\sum_{k_i \mid m^-}}_{1 \leq i \leq I} \binom{D}{I} \left( \lcm(\ord_d(k_i))\gcd(\ord_d(k_i)) \prod_{i=1}^I \frac{\varphi(k_i)}{\ord_d(k_i)} \right)\right)\\
            &= \sum_{D=0}^N 1+ \sum_{I=1}^D \left(\mathop{\sum_{k_i \mid m^-}}_{1 \leq i \leq I} \binom{D}{I} \left( \prod_{i=1}^I \varphi(k_i) \right)\right)\\
            &= \sum_{D=0}^N 1 + \sum_{I=1}^D \binom{D}{I} \sum_{k_1 \mid m^-}\varphi(k_1)\left( \cdots \left(\sum_{k_I \mid m^-} \varphi(k_I) \right)\cdots \right)\\
            &= \sum_{D=0}^N 1 + \sum_{I=1}^D \binom{D}{I} (m^-)^I
            = \sum_{D=0}^N (m^{-}+1)^D.
        \end{align*}
    \end{proof}

    Note that we have arrived at the conclusion of Corollary \ref{cor_power_periodic} by use of the explicit formulas.

    We now describe the tails and tree structure. First note that for $p=2$, every power map is a permutation comprised entirely of fixed points, so we exclude the case of $p=2$ from consideration. Also note that if the degree of the map is relatively prime to $p-1$, then the map is also a permutation.

    \begin{thm}
        Let $p$ be a prime and $d>1$ an integer. Let $P^N_d$ be the degree $d$ powering map with $N>1$. Over both $\P^N$ and $\A^N$, the max tail is $\ord_d(p-1)$. If $\gcd(d,p-1) \neq 1$, then at least half of all points are leaves:
        \begin{align*}
            \#\{\text{leaves for } \A^N\} &> \frac{p^N}{2} = \frac{\#\A^N(\F_p)}{2}\\
            \#\{\text{leaves for } \P^N\} &> \sum_{D=0}^N \frac{p^D}{2} = \frac{\#\P^N(\F_p)}{2}.
        \end{align*}
    \end{thm}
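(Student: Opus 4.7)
The plan is to reduce both claims to the one-dimensional case of Proposition \ref{prop_power} using the product structure developed in Section \ref{sect_dimN}. For the tail bound, the key tool is the corollary $t(f \times g, (x,y)) = \max(t(f,x), t(g,y))$ from the previous subsection. Applied inductively to $P^N_d = P_d \times \cdots \times P_d$ ($N$ factors), this gives $t(P^N_d, z) = \max_i t(P_d, z_i)$, so the supremum of tails over $\A^N(\F_p)$ agrees with the supremum over $\A^1(\F_p)$, which is the one-dimensional value from Proposition \ref{prop_power}. For projective space, I would use the invariant decomposition $\P^N = \A^N \sqcup \{x_N = 0\}$, noting that $P^N_d$ restricts to $P^{N-1}_d$ on the hyperplane at infinity and that preimages under a split power map remain within each stratum; an easy induction then transports the affine max tail to $\P^N$.

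For the leaf count on $\A^N$, the split product structure gives
\[
\#\{\text{non-leaves in } \A^N(\F_p)\} = \bigl(\#\{\text{non-leaves in } \A^1(\F_p)\}\bigr)^N,
\]
since a tuple has a preimage iff each coordinate does. In dimension one, the non-leaves are $\{0\}$ together with the $d$-th powers in $\F_p^*$, a subgroup of index $\gcd(d,p-1)$; the hypothesis $\gcd(d,p-1) \geq 2$ gives at most $(p+1)/2$ non-leaves, hence at least $p^N - ((p+1)/2)^N$ leaves. Showing this exceeds $p^N/2$ reduces to the elementary inequality $(1+1/p)^N < 2^{N-1}$, which holds for $N \geq 2$ and $p \geq 3$; the case $p=2$ is already excluded in the paragraph preceding the theorem.

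For the projective leaf count, I would iterate the decomposition $\P^N = \bigsqcup_{D=0}^N \A^D$ into power-map-invariant affine strata whose preimages remain within the stratum containing them. This yields
\[
\#\{\text{leaves in } \P^N(\F_p)\} = \sum_{D=0}^N \#\{\text{leaves in } \A^D(\F_p)\},
\]
and the affine bound contributes a strict surplus over $p^D/2$ on each stratum of dimension $D \geq 2$. The main obstacle is that the low-dimensional strata ($\A^0$ has no leaves, and $\A^1$ contributes only around $(p-1)/2$ leaves) individually fall short of the $p^D/2$ target, so one must verify that the surplus from the higher strata compensates. For $p \geq 5$ this follows from the same elementary inequality with room to spare, while the borderline case $p=3$ requires a direct arithmetic check on the finite sum.
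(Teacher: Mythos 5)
Your tail argument, the product corollary reducing to Proposition \ref{prop_power}, is exactly the paper's approach and is correct. Your affine leaf count is actually \emph{more} rigorous than the paper's: the paper writes the number of leaves in $\A^N$ as $N\frac{p-1}{2}p^{N-1}$, which is the first term of inclusion--exclusion over the $N$ events ``coordinate $i$ is a leaf'' and hence an \emph{overcount}, not a valid lower bound. Your complement count $p^N - ((p+1)/2)^N$ is the correct exact lower bound (obtained because the non-leaves are exactly the tuples each of whose coordinates is a $d$-th power or $0$), and your inequality $(1+1/p)^N < 2^{N-1}$ closes the affine case cleanly for $N\geq 2$, $p\geq 3$.

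The projective case is where you should push further, because the ``direct arithmetic check'' you propose to defer for $p=3$ in fact reveals that the stated theorem is \textbf{false} for $p=3$, $N=2$. Take $P_2$ on $\P^2(\F_3)$: the invariant affine chart contributes exactly $9-4=5$ leaves (the non-leaves are $\{0,1\}^2$), and the line at infinity contributes exactly $1$ leaf (namely $(1\col 2\col 0)$), giving $6$ leaves total, while $\#\P^2(\F_3)/2 = 13/2 = 6.5$. Your own lower bound $\sum_{D=0}^{N}\bigl(p^D - ((p+1)/2)^D\bigr)$ evaluates to $0+1+5 = 6$ here, and this happens to be sharp, so no refinement of the estimate can save the claim. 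The paper's proof of the projective case also contains an algebraic error at the final step: after collapsing the sums, the displayed inequality reduces to $\frac{p^N+1}{2} > \frac{1+p+\cdots+p^N}{2}$, which is equivalent to $p > p^N$ and hence false for all $N\geq 2$. So while your strategy is the right one, the conclusion you are being asked to reach cannot be reached; the theorem needs either the hypothesis $p\geq 5$ (or $N\geq 3$) on the projective statement, or the strict inequality weakened, before either your argument or the paper's can go through.
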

    \begin{proof}
        Lucheta \cite[Theorem 31]{Lucheta} gives the max tail in dimension $1$ as $\ord_d(p-1)$. Since the tail in dimension $N$ is the max of the tails of the coordinates, we have the max tail is still $\ord_d(p-1)$.

        We know from Lucheta \cite[Corollary 23]{Lucheta} that for $P_d$ for $\A^1$ at least $(p-1)/2$ points are leaves. To be a leaf in dimension $N$, a point must be a leaf for at least one coordinate as a dimension 1 point. In particular, if one coordinate is a leaf, then the other $N-1$ coordinates can be chosen arbitrarily. Applying this to each coordinate, we count the total number of leaves for $\A^N(\F_p)$ as
        \begin{equation*}
            N\frac{p-1}{2}p^{N-1} = \frac{N}{2}(p^N - p^{N-1}) \geq p^N - p^{N-1} = p^{N-1}(p-1) >  p^{N-1}\frac{p}{2} = \frac{p^N}{2}.
        \end{equation*}

        For projective space, we have a similar count on each set of coordinates with $z_D = 1$ and $z_i =0$ for $D < i \leq N$.         Since the point $(1,0\ldots,0)$ is a totally ramified fixed point, it is not a leaf, so we start at $D=1$.
        \begin{align*}
            \sum_{D=1}^N D\frac{p-1}{2}p^{D-1}
            &= \frac{p-1}{2} + 2\frac{p-1}{2}p + \cdots + N\frac{p-1}{2}p^{N-1}\\
            &> \frac{p-1}{2} + \frac{p-1}{2}p + \cdots + \frac{p-1}{2}p^{N-1} + 1 \quad \text{since $N \geq 2$}\\
            &> \frac{1+p}{2} + \frac{p^2}{2} + \cdots + \frac{p^D}{2} \quad \text{since $p>2$.}\qedhere
        \end{align*}
    \end{proof}

    For $d$ a prime, we can describe the full tree structure.
    \begin{prop}
        Let $p$ be a prime and $d>1$ be an integer. Let $P_d$ be the degree $d$ powering map on dimension $N>1$. Every vertex has either $0,1,d,\ldots,d^N$ preimages. If $d$ is prime, then the trees are as full as possible of level $v_d(p-1)$, i.e., the only missing preimages are those of coordinates $0$.
    \end{prop}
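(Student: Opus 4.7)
The plan is to reduce every claim to the dimension $1$ analysis via the split structure of $P_d^N$. Since the map is split, $P_d^{-1}(w_1,\ldots,w_N) = \prod_{i=1}^N P_d^{-1}(w_i)$, so the preimage count in dimension $N$ factors as a product of dimension $1$ counts. In dimension $1$, the count follows from the cyclic structure of $\F_p^{\ast}$: $P_d^{-1}(w)$ has size $1$ if $w=0$, size $\gcd(d,p-1)$ if $w$ is a nonzero $d$-th power, and size $0$ otherwise. When $d$ is prime this gcd is either $1$ or $d$, so the product over the $N$ coordinates is either $0$ (whenever some nonzero coordinate fails to be a $d$-th power) or a product of $1$s and $d$s, which lies in $\{0,1,d,d^2,\ldots,d^N\}$.

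For the tree structure claim, I would assume $d \mid p-1$ (otherwise $P_d$ is a permutation and the trees are trivial), and write $p-1 = m^- d^{\nu}$ with $\nu = v_d(p-1)$. In dimension $1$ I would use the group decomposition $\F_p^{\ast} \cong H \times C$ with $H$ cyclic of order $m^-$ and $C$ cyclic of order $d^{\nu}$: on $H$ the map $P_d$ is an automorphism because $\gcd(d,m^-)=1$, so $H$ is exactly the set of periodic points in $\F_p^{\ast}$, while on $C$ the action corresponds to multiplication by $d$ on $\Z/d^{\nu}\Z$, driving every element to the identity in at most $\nu$ steps. This forces the dimension $1$ tree rooted at each periodic $h \in H$ to have depth exactly $\nu$; every non-root vertex has order of the form $d^j m$ with $0 \le j < \nu$ and $m \mid m^-$, hence is a nonzero $d$-th power with $d$ preimages, and the root has $d-1$ strict preperiodic children (one of the $d$ preimages in $\F_p^{\ast}$ being the cycle predecessor).

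Finally, I would lift to dimension $N$ via the split structure. For a periodic point $P = (w_1,\ldots,w_N)$ with $N'$ nonzero coordinates, every preimage has the zero coordinates frozen at $0$, so the tree rooted at $P$ is a tuple-wise product of $N'$ copies of the dimension $1$ tree just described and $N-N'$ trivial single-vertex factors. Multiplying the dimension $1$ preimage counts gives $d^{N'}$ preimages at each non-root vertex and $d^{N'}-1$ immediate children of the root, which is precisely the \emph{as full as possible} statement with every missing branch attributable to a zero coordinate. The main point I expect will require care is verifying that a preimage of a tail $k$ tuple really sits at tail $k+1$ rather than collapsing to smaller tail: this follows because the coordinate achieving the dimension $1$ tail $k$ in $z$ strictly advances to tail $k+1$ under any $d$-th root, while all other coordinates advance by at most one, so the maximum is preserved and the product organizes cleanly by depth.
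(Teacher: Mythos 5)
Your proof is correct and follows the same overall strategy as the paper: count preimages coordinate by coordinate using the split structure, establish the full-tree structure in dimension $1$, and then lift to dimension $N$ via the product decomposition. The main difference is how you handle dimension $1$. The paper simply cites Lucheta for the fact that the trees are full of level $v_d(p-1)$, whereas you reprove it directly from the decomposition $\F_p^\ast \cong H \times C$ with $\#H = m^-$ and $\#C = d^\nu$, identifying $H$ with the periodic part and the multiplication-by-$d$ dynamics on $\Z/d^\nu\Z$ with the tail structure. This makes the argument self-contained, and it buys you more: it cleanly explains why vertices at depth less than $\nu$ are $d$-th power residues (order dividing $m^- d^{\nu-1}$) and hence have a full set of $d$ preimages, which is exactly what "full tree" means. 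You also fill a gap the paper passes over silently, namely the verification that a preimage of a tail-$k$ tuple actually sits at tail $k+1$ (the maximizing coordinate advances strictly, the others by at most one), which is what lets you conclude the product tree organizes by depth without collapse. Finally, you are more careful than the paper's proof on the preimage-count claim: the paper asserts ``$d$ solutions otherwise'' for arbitrary $d>1$, but the correct coordinate-wise count is $\gcd(d,p-1)$, and your observation that this lies in $\{1,d\}$ precisely when $d$ is prime correctly delimits where the conclusion $\{0,1,d,\ldots,d^N\}$ holds. One minor imprecision: you write that every non-root vertex has order $d^j m$ with $0 \le j < \nu$; vertices at depth $\nu$ have $j = \nu$ and are leaves, so that clause should either read $j \le \nu$ or be restricted to non-leaf vertices.
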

    \begin{proof}
        For the preimages of a given point, we are solving $x^d = z$ for each coordinate $z$. There can be no solutions if $z$ is not a $d$-th power residue, $1$ solution if $z$ is $0$, and $d$ solutions otherwise. Hence, the total number of preimages depends on the number of nonzero coordinates when all coordinates are $d$th power residues.

        When $d$ is prime, Lucheta \cite[Section 7]{Lucheta} proves the trees are full trees for the non-zero points. In the higher dimensional case, we get full preimage sets of each nonzero coordinate and the single primage 0 for the 0 coordinates.
    \end{proof}
    \begin{cor}
        Let $p,d$ be prime numbers. Let $P_d$ be the degree $d$ powering map in dimension $N$. Let $m^{-}$ be the integer part of $p-1$ relatively prime to $d$. We count the number of strictly preperiodic points for $P_d$ as
        \begin{align*}
            \sum_{k>0}\#\Pre_k(P_d,\A^N(\F_p)) &= p^N - (m^-+1)^N\\
            \sum_{k > 0} \#\Pre_k(P_d,\P^N(\F_p)) &= \sum_{D=0}^N p^D - \sum_{D=0}^N (m^-+1)^D
        \end{align*}
        with the number with tail $k$ as
        \begin{align*}
            \#\Pre_k(P_d,\A^N(\F_p)) = \sum_{e=0}^N(d^e-1)\binom{N}{e} (m^-)^{N-e}(d^{N-e})^k\\
            \#\Pre_k(P_d,\P^N(\F_p)) = \sum_{D=0}^N\sum_{e=0}^D(d^e-1)\binom{D}{e} (m^-)^{D-e}(d^{D-e})^k.
        \end{align*}
    \end{cor}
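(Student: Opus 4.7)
The plan is to derive both total counts first by a one-line subtraction and then refine to the $\Pre_k$ counts by exploiting the split structure of $P_d$ together with the tree description from the preceding proposition. For the totals, observe that every point of the finite set $\A^N(\F_p)$ (respectively $\P^N(\F_p)$) is preperiodic, so
\begin{equation*}
\sum_{k>0}\#\Pre_k(P_d, X) \;=\; \#X \;-\; \#\Per(P_d, X),
\end{equation*}
and the claimed formulas are immediate from Theorem~\ref{thm_powering-dimN} together with $\#\A^N(\F_p) = p^N$ and $\#\P^N(\F_p) = \sum_{D=0}^N p^D$.

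For the tail-$k$ refinement I would first establish a clean dimension-one identity. Since $d$ is prime and $\F_p^*$ is cyclic of order $p - 1 = m^- d^{v_d(p-1)}$, an element $x \in \F_p^*$ has tail exactly $k$ under $P_d$ iff $\ord_p(x) = r d^k$ for some $r \mid m^-$; summing $\varphi(rd^k) = \varphi(r)\varphi(d^k)$ over such $r$ gives $m^- \varphi(d^k)$ such elements, and including the fixed point $0$ yields $\#\{z \in \A^1 : t(P_d, z) \leq k\} = m^- d^k + 1$. The preceding proposition supplies the equivalent geometric picture of full $d$-ary trees of depth $v_d(p-1)$ hanging off each nonzero periodic point, which I would invoke as an independent verification. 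To lift to $\A^N$, apply the corollary from Section~\ref{sect_dimN} that the tail of a tuple under a split map is the coordinatewise maximum: the condition $t(P_d, z) \leq k$ then factors across coordinates, so $\#\{z : t \leq k\} = (m^- d^k + 1)^N$, and
\begin{equation*}
\#\Pre_k(P_d, \A^N) \;=\; (m^- d^k + 1)^N - (m^- d^{k-1} + 1)^N,
\end{equation*}
which a binomial expansion rearranges into the stated sum indexed by $e$. The projective formulas follow by stratifying $\P^N(\F_p) = \bigsqcup_{D=0}^N U_D$ by the index of the first nonzero homogeneous coordinate, with each $U_D \cong \A^D$ being $P_d$-invariant (zero coordinates remain zero under powering) and $P_d|_{U_D}$ equal to the affine powering map on $\A^D$; summing the affine identity over $D$ yields the projective statement.

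The main obstacle is the bookkeeping around zero coordinates rather than any conceptual difficulty: isolating the role of the fixed point $0$ in the dimension-one cumulative count, correctly handling the stratification of $\P^N$ so that the sum over $D$ reproduces the claimed form, and carefully matching the binomial expansion of $(m^- d^k + 1)^N - (m^- d^{k-1} + 1)^N$ to the summation over $e$ in the statement. Once $d$ is taken prime, the tree-fullness half of the preceding proposition and the cyclic group description of the dimension-one dynamics make every step essentially mechanical.
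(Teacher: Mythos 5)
Your overall route is correct and is essentially the paper's: stratify by the number of zero coordinates, use tree-fullness for prime $d$, and use the coordinatewise-max formula for tails. Your presentation is a bit cleaner in that you first compute the cumulative count
\begin{equation*}
\#\{z \in \A^N(\F_p) : t(P_d,z) \le k\} \;=\; \bigl(m^- d^k + 1\bigr)^N, \qquad 0 \le k \le \nu := v_d(p-1),
\end{equation*}
and then take a telescoping difference, whereas the paper counts tree levels directly; the two organizations are equivalent, and your dimension-one order-counting argument matches the tree picture used in the paper. The totals, the stratification of $\P^N(\F_p)$ by the last (or first) nonzero coordinate, and the observation that each stratum is $P_d$-invariant and isomorphic to an affine powering map are all fine.

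The one step you should not wave through is the claim that a binomial expansion ``rearranges into the stated sum indexed by $e$.'' Expanding your difference actually gives
\begin{equation*}
\#\Pre_k(P_d,\A^N(\F_p)) \;=\; \sum_{e=0}^N \binom{N}{e}\,(m^-)^{N-e}\,\bigl(d^{N-e}-1\bigr)\bigl(d^{N-e}\bigr)^{k-1},
\end{equation*}
which is \emph{not} the printed expression $\sum_{e=0}^N(d^e-1)\binom{N}{e}(m^-)^{N-e}(d^{N-e})^k$: the factor should be $(d^{N-e}-1)$, not $(d^e-1)$, and the last exponent should be $k-1$, not $k$. The paper's own proof already uses $(d^{N-e}-1)$ and implicitly uses the exponent $k-1$ in the geometric-series step, so the statement carries typos, but as written your ``rearrangement'' claim is false. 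A quick check at $p=5$, $d=2$, $N=2$ (so $m^-=1$, $\nu=2$) settles it: your formula gives $\#\Pre_1 = 3^2 - 2^2 = 5$, which is the right count, while the printed sum gives $2\cdot 2 + 3 = 7$. You should also record the range $1 \le k \le \nu$ explicitly, since $(m^- d^k + 1)^N - (m^- d^{k-1} + 1)^N$ is nonzero for $k > \nu$ but $\Pre_k$ is then empty. The projective formula needs the same two corrections, applied inside the sum over $D$.
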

    \begin{proof}
        The strictly preperiodic points are full trees of level $\nu=v_d(p-1)$. We split the sum into pieces based on how many coordinates are $0$; e.g., if there are $e$ zero coordinates, then the first set of preimages has $d^{N-e}$ points. This gives the stated formula since at the first level we are missing one node (for the periodic point). For projective space we do this for each sub-dimension ($z_D = 1$ and $z_i = 0$ for $D<i\leq N$).

        We then get the total number of (strictly) preperiodic points as
        \begin{align*}
            \sum_{k > 0} \#\Pre_k(P_d,\A^N(\F_p)) &=
            \sum_{e=0}^N (d^{N-e}-1)\binom{N}{e}(m^-)^{N-e}\left(\sum_{i=1}^{\nu-1}(d^{N-e})^k\right) \\
            &=\sum_{e=0}^N \binom{N}{e}(m^-)^{N-e}(d^{(N-e)\nu}-1)\\
            &=\sum_{e=0}^N \binom{N}{e} (p-1)^{N-e} - \binom{N}{e}(m^-)^{N-e}\\
            &= p^N - (m^-+1)^N,
        \end{align*}
        which is all points minus the periodic ones. Similarly, for projective space,
        \begin{align*}
            \sum_{k > 0} \#\Pre_k(P_d,\P^N(\F_p)) &=
            \sum_{D=0}^N\sum_{e=0}^D (k^{D-e}-1)\binom{D}{e}(m^-)^{D-e}\left(\sum_{k=1}^{\nu-1}(d^{D-e})^k\right) \\
            &=\sum_{e=0}^D \binom{D}{e}(m^-)^{D-e}(d^{(D-e)\nu}-1)\\
            &=\sum_{e=0}^D \binom{D}{e} (p-1)^{D-e} - \binom{D}{e}(m^-)^{D-e}\\
            &= \sum_{D=0}^N p^D - \sum_{D=0}^N(m+1)^D. \qedhere
        \end{align*}
    \end{proof}

    \begin{exmp}
        If $p-1 = \prod p_i$ = d, then the map has $2^{N+1}-1$ fixed points and all the other points are preperiodic leaves. This gives the largest number and the smallest (other than permutations) ratio of preperiodic leaves to periodic points.
    \end{exmp}

\section{Further Questions} \label{sect_further}
There are a number interesting polynomial maps outside the scope of this article that warrant further study. For example, the multivariate Chebyshev polynomials, e.g., $f(x,y) = (x^2-2y, y^2-2x)$, seem to have many more periodic points than any of the split polynomial maps. It would be interesting to establish a classification of all polynomial maps in higher dimensions for which this article is a first step.

Additionally, while dynamical zeta functions and periodic point counts have been studied in dimension 1 \cite{Bridy, Manes4}, little has been done in higher dimensions. It seems reasonable to expect that the results in this article could be extended to $\F_{p^k}$ (similar to \cite{Sha}) and applied to such problems.

\begin{code}
\begin{python}
def ord(x,p):
    return Zmod(p)(x).multiplicative_order()

def val(x,p):
    return x.valuation(p)

def t(x,p,d):
    return val(ord(x,p),d)

def t2(x,y,p):
    return max(t(x,p),t(y,p))

def c(x,p):
    l=ord(x,p)
    while 2.divides(l):
        l=l/2
    return ord(2,l)

def c2(x,y,p):
    return lcm(c(x,p),c(y,p))

def find_h(ordh,d):
    for n in range(1,d):
       if gcd(2,n)==1:
            if ord(2,n) == ordh:
                print n

def periodic_count(p,k,N,check=False,per=0):
    import itertools
    m=(p-1)/prod([(t**((p-1).ord(t))) for t in k.prime_divisors()])
    P=ProjectiveSpace(GF(p),N)
    H=End(P)
    Pd=H([z^k for z in P.gens()])
    if check:
        G=Pd.cyclegraph().all_simple_cycles()

    if per==0:
        #return all perioic points
        c=0
        for dim in range(N,-1,-1):
            for I in range(dim,-1,-1):
                VV= [ZZ(m).divisors() for i in range(I)]
                if I==0 and per==1:
                    c+=1#origin is fixed
                if I==1:
                    V=VV[0]
                else:
                    V = list(itertools.product(*VV))
                for v in V:
                    if I==1:
                        c+=binomial(dim,I)*euler_phi(v)
                    else:
                        c+= binomial(dim,I)*prod([euler_phi(w) for w in v])
        if check:
            s=sum([len(t)-1 for t in G])
            return (c,s)
        return c
    else:
        #count all of given period
        c=0
        for dim in range(N,-1,-1):
            for I in range(dim,-1,-1):
                VV= [ZZ(m).divisors() for i in range(I)]
                if I==0 and per==1:
                    c+=1#origin is fixed
                if I==1:
                    V=VV[0]
                else:
                    V = list(itertools.product(*VV))
                for v in V:
                    if I==0:
                        pers = [0]
                    elif I ==1:
                        pers = [ord(k,v)]
                    else:
                        pers = [ord(k,w) for w in v]
                    if lcm(pers)==per:
                        if I==1:
                            c+=binomial(dim,I)*euler_phi(v)
                        else:
                            c+= binomial(dim,I)*prod([euler_phi(w) for w in v])
        if check:
            s=0
            for t in G:
                if (len(t)-1)==per:
                    s+=per
            return(c,s)
        return c
#----------------

p=37
A.<z>=AffineSpace(GF(p),1)
A2.<x,y>=AffineSpace(GF(p),2)
HA=End(A)
H2=End(A2)
f=HA([z^2])
f2 = H2([x^2,y^2])

#dim 1 brute force
per1 = [0 for i in range(20)]
for v in A:
    ta,pe=v.orbit_structure(f)
    if ta ==0:
        per1[pe]+=1
per1

#dim2 from dim 1
per2=[0 for i in range(len(per1))]
for i in range(len(per1)):
    if per1[i]!=0:
        for d in ZZ(i).divisors():
            for e in ZZ(i).divisors():
                if lcm(d,e) == i:
                    per2[i]+=per1[d]*per1[e]
per2

#brute force check
per = [0 for i in range(20)]
for v in A2:
    ta,pe=v.orbit_structure(f2)
    if ta ==0:
        per[pe]+=1
per

per2=[0 for i in range(len(per1))]
#all nonzero
m = (p-1)/(2**(val(p-1,2)))
for d in ZZ(m).divisors():
    pd = ord(2,d) #period of each x
    wd = euler_phi(ZZ(d))/pd #number of cycles
    for e in ZZ(m).divisors():
        pe = ord(2,e)  #period of y
        we = euler_phi(ZZ(e))/pe #number of cycles
        n = lcm(pd,pe) #overall period
        per2[n] += wd*we*(gcd(pe,pd))*n
print "A:",per2
#(x,0) and (0,y)
for d in ZZ(m).divisors():
    pd = ord(2,d) #period
    wd = euler_phi(ZZ(d))/pd #num cycles
    per2[pd] += 2*(wd*pd)
per2[1]+=1 #(0,0)
print "B:",per2

#-----------------
per = [0 for i in range(20)]
tail = [0 for i in range(20)]
for v in A2:
    ta,pe=v.orbit_structure(f2)
    tail[ta] += 1
    if ta ==0:
        per[pe]+=1
print "per:",per
print "tail:",tail

#total
tot=0
totk=[0 for i in range(20)]
for i in range(1,k+1):
    tot+=3/4*m^2*4^i + 1/2*2*m*2^i
    totk[i]+=3/4*(m^2*4**(i)) + 1/2*(2*m*2**(i))
print "tot:",tot
print "totk:",totk
\end{python}
\end{code}

\bibliography{masterlist}
\bibliographystyle{plain}

\end{document}